\documentclass[a4paper,11pt]{article}
\usepackage{amsmath}
\usepackage{amsthm}
\usepackage[all]{xy}
\usepackage[english]{babel}
\usepackage[utf8]{inputenc}
\usepackage{graphicx}
\usepackage{xypic,color, xcolor}
\usepackage{amsfonts, amssymb}

\usepackage{utfsym}
\usepackage{tikz-cd}
\newtheorem{thm}{Theorem}
\newtheorem{defn}[thm]{\textbf{Definition}}
\newtheorem{example}{Example}[section]

\newtheorem{cor}[thm]{Corollary}

\newtheorem{prop}[thm]{Proposition}
\theoremstyle{definition}

\theoremstyle{definition}

\theoremstyle{remark}
\newtheorem{rem}[thm]{Remark}

\renewcommand{\k}{\Bbbk}

\newcommand{\de}{\Delta}

\newcommand{\ot}{\otimes}

\newcommand{\rt}{\rightarrow}

\usepackage{xypic,color}
\usepackage{amsfonts, amssymb}%
\usepackage{multicol}
\usepackage{amscd, amsthm, mathrsfs}
\usepackage{todonotes}

\usepackage{rotating}
\usetikzlibrary{calc}

\tikzset{curve/.style={settings={#1},to path={(\tikztostart)
			.. controls ($(\tikztostart)!\pv{pos}!(\tikztotarget)!\pv{height}!270:(\tikztotarget)$)
			and ($(\tikztostart)!1-\pv{pos}!(\tikztotarget)!\pv{height}!270:(\tikztotarget)$)
			.. (\tikztotarget)\tikztonodes}},
	settings/.code={\tikzset{quiver/.cd,#1}
		\def\pv##1{\pgfkeysvalueof{/tikz/quiver/##1}}},
	quiver/.cd,pos/.initial=0.35,height/.initial=0}

\begin{document}

\title{Bounds on Frobenius dimension}
\author{D. Artenstein, J. Cóppola, J. Finot, A. Gonz\'alez and G. Mata}
\date{\today}
\maketitle


%
%
%
%
%



\begin{abstract}

In this article, we prove two types of results about the Frobenius dimension of associative algebras. First, we refine the known upper bound for the Frobenius dimension of a finite dimensional algebra  in terms of its dimension as a vector space, and show that the only algebras reaching this bound are radical square zero algebras associated with single-vertex quivers. Second, we compute Frobenius dimension for low-dimensional algebras explicitly, and for truncated path algebras in terms of paths with no detours in their underlying quivers.

\noindent\textbf{Keywords:} nearly Frobenius algebra, Frobenius dimension, truncated path algebra.\\
2010 Mathematics Subject Classification 16W99.
\end{abstract}

\section{Introduction}

The concept of a nearly Frobenius algebra originates from a topological motivation: the fact that the homology of the free loop space ($H_*(LM)$) of a manifold possesses a coproduct structure similar to that of a Frobenius algebra, yet lacks a defined counit (\cite{CG}). This result has shifted interest toward an algebraic analysis of these structures, viewed as a natural generalization of classical Frobenius algebras,  studied in \cite{GLSU},  \cite{AGL15}, \cite{AGM20} and \cite{AGM22} among others.
 
Central to this theory is the Frobenius space ($\mathscr{E}_A$), which encompasses all possible coproducts that endow an algebra A with a nearly Frobenius structure (\cite{AGL15}). The dimension of this space, called the Frobenius dimension ($\operatorname{Frobdim} (A)$), serves as an algebraic invariant. While it is well-established that for Frobenius algebras this dimension coincides with the dimension of the algebra itself, its behavior in the general case remains a fertile and complex field of study.

This article is structured around two central pillars that significantly advance the understanding of this invariant. First, we address the global limits of the Frobenius dimension. Existing literature established a general upper bound of $(dim_{\k}A)^2$, a limit known to be loose since equality is never achieved for algebras of dimension greater than one.  In this work, we refine this bound by proving that for any finite-dimensional algebra of dimension $n\geq 3$, the Frobenius dimension is bounded by $(n-1)^2.$ 
\begin{thm}\label{1}(Theorem \ref{teo7})
	If $A$ is a $\k$-algebra of finite dimension $n\geq 3$, then $$\operatorname{Frobdim}(A)\leq (n-1)^2.$$
\end{thm}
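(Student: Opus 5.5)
The plan is to use the standard identification of the Frobenius space with a space of invariants. A coproduct $\Delta$ making $A$ nearly Frobenius is an $A$-bimodule map $A\to A\ot A$, so it is determined by $x=\Delta(1)$. Hence
$$\mathscr{E}_A\cong\{x\in A\ot A:\ (a\ot 1)x=x(1\ot a)\ \text{for all } a\in A\}.$$
For a fixed $a$, this condition says that $x$ lies in the kernel of the operator $L_a\ot 1-1\ot R_a$ on $A\ot A$. Here $L_a$ and $R_a$ are left and right multiplication by $a$ on $A$. Identifying $A\ot A$ with $n\times n$ matrices, this kernel is the solution space of a Sylvester equation $L_aX=XR_a^{t}$. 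Its dimension is $\sum_{\lambda}\sum_{i,j}\min(p_i(\lambda),q_j(\lambda))$, where the $p_i(\lambda)$ and $q_j(\lambda)$ are the Jordan block sizes of $L_a$ and $R_a$ at a common eigenvalue $\lambda$. So $\operatorname{Frobdim}(A)$ is bounded by the minimum of this quantity over all $a\in A$, and more precisely by the dimension of the intersection of these kernels.

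\textbf{Step 1.} Take $a\notin\k 1$, which exists since $n\geq 2$. Then $L_a$ is not scalar, because $L_a(1)=a$. By Cauchy--Schwarz, $\dim\{X: TX=XS\}\leq\sqrt{\dim C(T)\,\dim C(S)}$, where $C(T)$ denotes the centralizer of $T$. Together with the classical fact that a non-scalar $n\times n$ matrix has centralizer of dimension at most $n^2-2n+2$, this gives $\operatorname{Frobdim}(A)\leq (n-1)^2+1$. Equality in this estimate forces $L_a$ (and similarly $R_a$) to be diagonalizable with exactly two eigenvalues $\lambda,\mu$, of multiplicities $n-1$ and $1$, with matching data for $R_a$. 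In ring terms this means $(a-\lambda)(a-\mu)=0$ and $\dim (a-\lambda)A=1=\dim A(a-\lambda)$.

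\textbf{Step 2.} This is the main obstacle: gaining the last unit. I would first try to show that for $n\geq3$ not every non-scalar $a$ can be extremal in this sense. If some $a$ is non-extremal, the Jordan-block count for that $a$ already gives at most $(n-1)^2$. Otherwise every element has the form $\lambda 1+r$ with $\dim rA\le 1$ and $\dim Ar\le 1$. Such elements span the whole of $A$, and for $n\geq 3$ I expect this to force a very rigid structure. Either $A$ has two idempotents whose constraints combine, or $A=\k 1\oplus J$ with $J$ a square-zero ideal, so that $A$ is a local radical-square-zero algebra.

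In the first case I would take two extremal elements $a,b$ whose kernels do not coincide and intersect them. Then $\dim(\ker\varphi_a\cap\ker\varphi_b)<(n-1)^2+1$, where $\varphi_c$ denotes the operator $L_c\ot 1-1\ot R_c$. In the local radical-square-zero case I would compute $\mathscr{E}_A$ directly. Write $x=\alpha\, 1\ot 1+u\ot 1+1\ot v+y$ with $u,v\in J$ (absorbing any $1\ot 1$ component into $\alpha$) and $y\in J\ot J$. Imposing $(r\ot1)x=x(1\ot r)$ for $r\in J$ gives $\alpha=0$ and $u=v=0$, because $n\ge 3$ allows choosing $r$ independent of $u$ and $v$. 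Thus $\mathscr{E}_A=J\ot J$, which has dimension exactly $(n-1)^2$.

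\textbf{Step 3.} Combining the cases gives $\operatorname{Frobdim}(A)\le (n-1)^2$. As a byproduct, the argument identifies the algebras attaining the bound, consistent with the characterization announced in the abstract. The delicate point, where I expect most of the work, is the structural classification in Step 2 of algebras all of whose non-scalar elements are rank-one perturbations of scalars. If that classification proves messy, a fallback is to average over a generic $a$ and show that a generic element has at least three eigenvalues or a nontrivial Jordan block unless $A$ is local with $J^2=0$.
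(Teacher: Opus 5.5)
There is a genuine gap: as written, your proposal proves $\operatorname{Frobdim}(A)\le(n-1)^2+1$, not $(n-1)^2$.

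\textbf{Step~1 is fine but cannot go further.} Your Cauchy--Schwarz estimate is correct once you pass to $\overline{\Bbbk}$, which is harmless since $\mathscr{E}_A$ is the solution space of a linear system over $\Bbbk$. But it only gives $(n-1)^2+1$, and no single element can do better. For $A=\Bbbk 1\oplus J$ with $J^2=0$ and $a=\lambda 1+r$, $0\neq r\in J$, the kernel of $L_a\otimes 1-1\otimes R_a$ is $J\otimes J\oplus\Bbbk(1\otimes r+r\otimes 1)$, which has dimension $(n-1)^2+1$.

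\textbf{So the whole theorem lives in Step~2, and there you only say what you expect.} The classification of algebras in which every non-scalar element is $\lambda 1+r$ with $\dim rA=\dim Ar=1$ is not carried out. In the idempotent case you simply posit two extremal elements with different kernels whose intersection drops the dimension.

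\textbf{The equality case in Step~1 is also misdescribed.} The centralizer bound $n^2-2n+2$ is attained by diagonalizable matrices with eigenvalue multiplicities $(n-1,1)$, but also by $\lambda I+N$ with $N$ of rank one and $N^2=0$ (Jordan type $(2,1,\dots,1)$). That non-diagonalizable type is exactly the type of $L_r$, $r\in J$, in the algebras that reach the bound.

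Your route can be completed over $\overline{\Bbbk}$, but the argument is absent from your text. A nontrivial idempotent $e$, after replacing it by $1-e$ if needed, must satisfy $eA=Ae=\Bbbk e$. Then $A\cong\Bbbk\times B$, and $(0,b-\mu 1_B)$ is non-extremal when $b\notin\Bbbk 1_B$ and $\mu$ is not an eigenvalue of left multiplication by $b$ on $B$. Otherwise $A$ is local, and extremality of each $r\in J$ forces $rJ=0$.

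\textbf{The simpler idea you are missing is the paper's proof of Theorem~\ref{teo7}: impose the condition for two elements at once, directly in coordinates.}
\begin{itemize}
\item Set $V=\operatorname{span}\{v_2,\dots,v_n\}$ and write $\Delta(1)=\alpha\,1\otimes 1+1\otimes u+w\otimes 1+y$ with $u,w\in V$ and $y\in V\otimes V$.
\item The $V\otimes V$-component of $(v_k\otimes 1)\Delta(1)=\Delta(1)(1\otimes v_k)$ reads $v_k\otimes u-w\otimes v_k=F_k(y)$ with $F_k$ linear. This determines all coordinates of $u$ and $w$ except their $v_k$-coordinates.
\item Repeating with a second basis vector $v_r\neq v_k$ determines those as well; this is where $n\ge 3$ enters.
\item The coefficient of $v_k\otimes 1$ then determines $\alpha$.
\end{itemize}
Thus $\Delta\mapsto y$ is injective on $\mathscr{E}_A$, giving $\operatorname{Frobdim}(A)\le\dim(V\otimes V)=(n-1)^2$. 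This works over any field, with no spectral theory and no classification.
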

Furthermore, we provide a complete characterization of the unique family of algebras that reach this maximum value: the radical square zero algebras associated with single-vertex quivers.
\begin{thm}(Proposition \ref{prop radical^2}, Theorem \ref{frobdim maxima})
	If $A$ is a $\k$-algebra of finite dimension $n\geq 3$, the following statements are equivalent
	
	\begin{enumerate}
		\item $\operatorname{Frobdim}(A)=(n-1)^2$. 
		\item $A$ admits a basis, as $\k$-vector space, $\mathcal{B}=\{1,v_2,\dots,v_n\}$: $v_iv_j=0$ for all $i,j=2,\dots,n$.
		\item  $A$ is a radical square  zero algebra with quiver 
		$$Q:\xymatrix{
			\underset{\dots}{\bullet}  \ar@(ul,dl)_{\alpha_n}\ar@(ur,dr)[]^{\alpha_3}\ar@(ul, ur)[]^{\alpha_2}
		}$$
		
	\end{enumerate}
	\end{thm}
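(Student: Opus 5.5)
We use the standard identification of the Frobenius space with $\mathscr{E}_A\cong\{w\in A\otimes A:\ (a\otimes 1)w=w(1\otimes a)\ \text{for all } a\in A\}$. This holds because a bimodule map $\Delta:A\to A\otimes A$ is determined by $w=\Delta(1)$, subject to $a\,w=w\,a$. We prove $(2)\Leftrightarrow(3)$, then $(2)\Rightarrow(1)$, then $(1)\Rightarrow(2)$.

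\emph{$(2)\Leftrightarrow(3)$.} Suppose $\mathcal B=\{1,v_2,\dots,v_n\}$ is as in (2). Then $J=\langle v_2,\dots,v_n\rangle$ is a two-sided ideal with $J^2=0$. Hence $J$ is nilpotent and $A/J\cong\k$. So $A$ is basic and local with radical $J$, $J^2=0$ and $\dim J/J^2=n-1$. Its quiver therefore has one vertex and $n-1$ loops, and $A\cong \k Q/\langle$paths of length $2\rangle$. Conversely, for that algebra take $v_i=\alpha_i$.

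\emph{$(2)\Rightarrow(1)$.} Write $v_1=1$ and $w=\sum_{i,j}c_{ij}v_i\otimes v_j$. It suffices to impose the condition for $a=v_k$ with $k\ge 2$. The left side $(v_k\otimes 1)w$ equals $\sum_j c_{1j}\,v_k\otimes v_j$, and the right side $w(1\otimes v_k)$ equals $\sum_i c_{i1}\,v_i\otimes v_k$. Comparing the coefficients of $v_k\otimes 1$ and $1\otimes v_k$ gives $c_{11}=0$. Comparing $v_k\otimes v_j$ for $j\neq k$ gives $c_{1j}=0$. Since $n\ge 3$ we can choose two distinct values of $k$, so $c_{1j}=0$ for every $j\ge2$, and symmetrically $c_{i1}=0$ for every $i\ge 2$. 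The coefficients $c_{ij}$ with $i,j\ge 2$ are unconstrained, so $\mathscr E_A=J\otimes J$ and $\operatorname{Frobdim}(A)=(n-1)^2$.

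\emph{$(1)\Rightarrow(2)$.} This is the main step, and we intend to extract it from the proof of Theorem \ref{teo7}. We expect that proof to exhibit hyperplanes $U,V\subset A$ with $\mathscr E_A\subseteq U\otimes V$, for instance by showing that the coefficient of $1\otimes 1$ and the mixed coefficients against $1$ must vanish. If $\operatorname{Frobdim}(A)=(n-1)^2$, this inclusion becomes an equality, so every simple tensor $u\otimes v$ with $u\in U$, $v\in V$ lies in $\mathscr E_A$. This gives $au\otimes v=u\otimes va$ for all $a\in A$. For nonzero $u,v$, an identity between simple tensors forces either $au=0=va$, or $au=\lambda u$ and $va=\lambda v$ with the same scalar $\lambda=\lambda(a,u,v)$. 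Letting $u$ vary over $U$, which has dimension at least $2$, shows that $\lambda$ depends only on $a$. Hence each $a$ acts on $U$ from the left, and on $V$ from the right, as a scalar $\chi(a)$. The map $\chi:A\to\k$ is then an algebra morphism, and we set $J=\ker\chi$.

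It remains to show $U=J=V$ and $J^2=0$. Since $JU=0$ and $VJ=0$, taking $a\in J\cap U$ gives $a\cdot U=0$. A dimension count using $\dim U=\dim J=n-1$ should show that $U\neq A$ forces $1\notin U$ and then $U=J$. For this we check that $\chi|_U=0$: if some $u\in U$ had $\chi(u)\neq 0$, then $u\cdot U$ would equal $U$, contradicting the inclusion $\mathscr E_A\subseteq U\otimes V$ read through the balance relations. The same argument gives $V=J$. Then $J\cdot J=J\cdot U=0$, and any basis $v_2,\dots,v_n$ of $J$ yields (2).

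The main obstacle is the first step of $(1)\Rightarrow(2)$: obtaining an inclusion $\mathscr E_A\subseteq U\otimes V$ with $U,V$ hyperplanes. If the proof of Theorem \ref{teo7} instead gives only a codimension-$(2n-1)$ subspace that is not of product form, we would first try to show that equality forces $\mathscr E_A$ to be closed under $(a\otimes 1)$ and $(1\otimes a)$. That closure would make it a sub-bimodule of the required product shape.
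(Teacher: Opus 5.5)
Your arguments for $(2)\Leftrightarrow(3)$ and $(2)\Rightarrow(1)$ are correct and are essentially those of Propositions~\ref{prop radical^2} and~\ref{uv=0}. The gap is the first step of $(1)\Rightarrow(2)$, and this step carries the whole implication.

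The proof of Theorem~\ref{teo7} does not produce hyperplanes $U,V$ with $\mathscr{E}_A\subseteq U\otimes V$. It compares the coefficients of $1\otimes v_k$, $v_k\otimes v_j$ and $v_i\otimes v_k$ (and repeats this for a second index $r$). This expresses $\alpha,\alpha_j,\beta_i$ as linear functions of the $\alpha_{ij}$. In other words, the projection of $\mathscr{E}_A$ onto $\operatorname{span}\{v_i\otimes v_j: i,j\ge 2\}$ along $\k 1\otimes A+A\otimes \k 1$ is injective. So $\mathscr{E}_A$ is a graph over a subspace, not a subspace of a product.

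Such an inclusion is in fact false for algebras covered by Theorem~\ref{teo7}. For $A=\k[x]/(x^3)$, the element $1\otimes x^2+x\otimes x+x^2\otimes 1$ lies in $\mathscr{E}_A$ and has tensor rank $3$. Hence no proper subspace can serve as $U$ or as $V$. The product form would therefore have to come from the equality $\operatorname{Frobdim}(A)=(n-1)^2$ itself, and you give no argument for that.

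Your fallback does not close the gap either. Nothing shows that equality makes $\mathscr{E}_A$ stable under left multiplication by $a\otimes 1$ and right multiplication by $1\otimes a$. Even if it did, note that $\mathscr{E}_A$ is always stable under $x\otimes y\mapsto xc\otimes by$. So $\mathscr{E}_A$ would then be a two-sided ideal of $A\otimes A$. Such ideals need not have the form $U\otimes V$; for example, $I\otimes A+A\otimes I$ with $I$ a proper nonzero ideal does not.

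Here is exactly what is missing. Let $U$ and $V$ be the smallest subspaces with $\mathscr{E}_A\subseteq U\otimes A$ and $\mathscr{E}_A\subseteq A\otimes V$. Then $\mathscr{E}_A\subseteq U\otimes V$ holds automatically. So your argument goes through once you prove $U\neq A\neq V$ under hypothesis (1), and the example above shows that this genuinely requires the hypothesis.

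The paper avoids this route and argues by contrapositive. Assuming $v_kv_s\neq 0$ for some $k,s$, it takes a nonzero structure constant $d_{ks}^h$. It then uses the comparison of $v_i\otimes v_j$-coefficients, or, when $h=k$, the $v_k\otimes v_k$-comparison together with the equations for the second index $r$. This yields a linear relation among the $\alpha_{ij}$ beyond the $2n-1$ of Theorem~\ref{teo7}, hence strict inequality.

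Once the product form is available, your second half works, but the step $\chi|_U=0$ should be made explicit. Suppose some $u\in U$ had $\chi(u)=1$. Then $1-u\in J$, and $VJ=0$ gives $v=vu=\chi(v)u$ for every $v\in V$, so $\dim V\le 1<n-1$. Symmetrically $\chi|_V=0$, so $U=V=J$ and $J^2=JU=0$.
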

	
Second, this paper conducts an exhaustive investigation into truncated path algebras. Moving beyond previous descriptions that were computationally difficult to implement, we provide explicit formulas for calculating both nearly Frobenius coproducts and the exact Frobenius dimension in these settings. The strength of this analysis lies in the introduction of two technical concepts: paths with ``no detours'' and the ``chop'' operation. These tools not only reduce the underlying technical complexity but also allow for proofs that are elegant and straightforward to write. 

\begin{thm}(Theorem \ref{frobdim truncada LLm})
	Let $\displaystyle{A=\k Q/R^m}$ be a truncated path $\Bbbk$-algebra,  and $Q$ a connected quiver. 
	Then, the general expression of the nearly Frobenius coproduct is:
	$$\Delta(1)=\sum_{i=-1}^{m-2}\quad\sum_{\alpha_1\cdots\alpha_{m+i}\in \mathcal{ND}_{m+i}}\lambda_{\alpha_1\cdots\alpha_{m+i}}w_{\alpha_1\cdots\alpha_{m+i}}$$
	where $\lambda_{\alpha_1\cdots\alpha_{m+i}}\in\k$,  and $w_{\alpha_1\cdots\alpha_{m+i}}$ is the chop of  $\alpha_1\cdots\alpha_{m+i}$, for $i=-1,\dots, m-2$.\\
	In particular,
	\begin{equation}\label{e2}
		\operatorname{Frobdim}_\k(A)= \sum_{i=-1}^{m-2}|\mathcal{ND}_{m+i}|=\sum_{i=-1}^{m-3}|\mathcal{ND}_{m+i}|+|Q_{2m-2}|	
	\end{equation}
\end{thm}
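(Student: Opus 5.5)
The plan is to use the standard identification of the Frobenius space with a space of invariant tensors. A nearly Frobenius coproduct is an $A$-bimodule map $\Delta:A\to A\otimes A$. Hence it is determined by $\Delta(1)$, and $\Delta(1)$ must satisfy $a\Delta(1)=\Delta(1)a$ for all $a\in A$. So $\mathscr{E}_A\cong\{x\in A\otimes A: ax=xa \ \forall a\in A\}$. Since $A=\k Q/R^m$ is generated by the vertices $e\in Q_0$ and the arrows $\alpha\in Q_1$, it suffices to impose $ex=xe$ and $\alpha x=x\alpha$ for these generators. We expand $x=\sum c_{p,q}\,p\otimes q$ in the basis of pairs of paths of length $\le m-1$.

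\textbf{Step 1 (vertex conditions).} The conditions $ex=xe$ force $c_{p,q}=0$ unless the end of $q$ meets the start of $p$ (with the paper's composition convention). Thus each surviving basis tensor $p\otimes q$ is a splitting of a concatenated path $\gamma=qp$ (or $pq$). This gives a grading of $x=\sum_N x_N$ by the total length $N=\ell(p)+\ell(q)$, with $N\le 2m-2$. The arrow conditions are homogeneous of degree $+1$, so they may be imposed on each $x_N$ separately.

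\textbf{Step 2 (arrow conditions).} For fixed $N$ and each arrow $\alpha$, we compare coefficients in $\alpha x_N=x_N\alpha$. A term $\alpha p\otimes q$ can only cancel against a term $p'\otimes q'\alpha$ coming from the same underlying path. Comparing coefficients shows that, for a fixed path $\gamma$ of length $N$, all splittings of $\gamma$ with both pieces of length $\le m-1$ carry the same coefficient. So $x_N$ is a combination of the chops $w_\gamma=\sum_{\gamma=\gamma_1\gamma_2,\ \ell(\gamma_i)\le m-1}\gamma_2\otimes\gamma_1$ (I take this to be the meaning of ``chop''). Two boundary effects remain:
\begin{itemize}
\item At the extreme splittings, multiplying by an arrow produces a factor of length $m$, which vanishes in $A$. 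This is harmless.
\item At a splitting where one factor is a vertex, multiplying by $\alpha$ produces a term $\alpha\gamma_2\otimes\gamma_1$ with nothing to cancel against unless the extended path is again of the allowed form.
\end{itemize}
This second effect forces the lower bound $N\ge m-1$. It also forces the condition that $\gamma$ admits no extension by an arrow on either end which would leave an uncancelled term. I expect this is exactly the ``no detours'' condition defining $\mathcal{ND}_N$. So the solution space is spanned by $\{w_\gamma:\gamma\in\mathcal{ND}_N,\ m-1\le N\le 2m-2\}$, which gives the stated form of $\Delta(1)$.

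\textbf{Step 3 (independence and the count).} Distinct paths $\gamma$ give chops with disjoint supports in the basis $\{p\otimes q\}$, because $(p,q)$ determines $\gamma$. Hence the $w_\gamma$ are linearly independent, and $\operatorname{Frobdim}(A)=\sum_{i=-1}^{m-2}|\mathcal{ND}_{m+i}|$. For the second equality in \eqref{e2}, note that every path of length $2m-2$ has no detours. Indeed, any extension of it has length $2m-1$, so every splitting of the extension contains a factor of length $\ge m$, and all terms vanish in $A$. Thus $\mathcal{ND}_{2m-2}=Q_{2m-2}$.

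The main obstacle is Step 2. One must carefully match the uncancelled boundary terms with the precise definition of $\mathcal{ND}$, including the case of cycles, where $\gamma$ may be extended at both ends by arrows that make different splittings coincide. Connectedness of $Q$ should only be needed to exclude degenerate vertex-only contributions.
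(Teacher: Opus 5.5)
Your framework is sound and matches the paper's. The Frobenius space is the space of $x\in A\otimes A$ with $ax=xa$. The vertex conditions leave only tensors $p\otimes q$ with $qp$ a path. The system decouples by the underlying path, because $(p,q)$ determines $qp$. The cancellations between $\alpha p\otimes q$ and $p'\otimes q'\alpha$ force a constant coefficient along the admissible splittings, and disjoint supports give independence.

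The gap is Step~2, which is the actual content of the theorem. You never derive which chops survive, and the mechanism you anticipate (extensions of $\gamma$, detected at splittings with a vertex factor) is the wrong one.

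Take $\gamma=\alpha_1\cdots\alpha_N$ with $N=m+i\ge m-1$. Its admissible splittings are $s=i+1,\dots,m-1$. A vertex factor occurs only when $N=m-1$, and there the product that would extend $\gamma$ creates a factor of length $m$, so it vanishes. The real obstructions are branchings, and they arise at splittings where neither factor need be a vertex:
\begin{itemize}
\item Suppose $N-s\le m-2$ and $\beta\ne\alpha_s$ is an arrow with $t(\beta)=t(\alpha_s)$. Then $\beta x$ contains $\beta\alpha_{s+1}\cdots\alpha_N\otimes\alpha_1\cdots\alpha_s$ with coefficient $c_s$. Every term of $x\beta$ has right factor ending in $\beta$, so $c_s=0$.
\item Symmetrically, $s\le m-2$ together with an arrow $\beta\ne\alpha_{s+1}$ satisfying $s(\beta)=s(\alpha_{s+1})$ forces $c_s=0$.
\end{itemize}
Letting $s$ run over $i+1,\dots,m-1$, these say exactly that $gr^-(t(\alpha_k))=gr^+(s(\alpha_k))=1$ for $i+2\le k\le m-1$. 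That is, $\alpha_{i+2}\cdots\alpha_{m-1}$ has no detours, which is the definition of $\mathcal{ND}_{m+i}$. This is the paper's detour argument.

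Your anticipated condition fails in both directions:
\begin{itemize}
\item For $m=3$, the chop $\alpha_2\alpha_3\otimes\alpha_1+\alpha_3\otimes\alpha_1\alpha_2$ has no vertex factor. It nevertheless fails as soon as a second arrow leaves $s(\alpha_2)$ or enters $t(\alpha_2)$.
\item For $m=2$ and $Q:1\xrightarrow{\beta}2\xrightarrow{\alpha}3\xrightarrow{\delta}4$, the chop $\alpha\otimes e_2+e_3\otimes\alpha$ is a coproduct, although $\alpha$ can be extended at both ends.
\end{itemize}
You also need the converse: each $w_\gamma$ with $\gamma\in\mathcal{ND}_{m+i}$ really is a coproduct. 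It follows once you have the complete list of effects: cancellation between consecutive splittings, terms killed by length $m$, and branching terms. Your proposal asserts this converse but does not check it.

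A secondary point: connectedness is what yields the lower bound $N\ge m-1$, not merely the exclusion of vertex-only terms. For $N\le m-2$ every splitting is admissible. The chop then survives iff no arrow enters $s(\gamma)$, no arrow leaves $t(\gamma)$, and $\gamma$ has no detours. In that case $\gamma$ spans an entire connected component, a linearly oriented quiver of type $A_{N+1}$. Connectedness excludes this only if $Q$ also contains a path of length $m-1$; the paper's proof uses $R^m\neq\{0\}$ for this. For $Q:1\xrightarrow{\alpha}2$ and $m=3$, the tensor $\alpha\otimes e_1+e_2\otimes\alpha$ is a coproduct with $N=1<m-1$. So your claim that the vertex effect forces $N\ge m-1$ needs this extra hypothesis.
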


Consequently, this work offers both a panoramic view of the theoretical limits of the invariant and a precise toolkit for its calculation in specific families of algebras.\\

The manuscript is organized as follows: Section 2 establishes the theoretical foundation by  fundamental concepts of path algebras and nearly Frobenius algebras. It also introduces an intermediate degree—defined by the number of paths of a specific length starting or ending at a vertex—and explores its relationship to the in-degree and out-degree of quivers. In Section 3, we present a comprehensive classification of low-dimensional nearly Frobenius algebras (specifically for dimensions 2 and 3), determining their explicit coproducts and Frobenius dimensions for both commutative and non-commutative cases.
Section 4 is dedicated to the core theoretical contribution: the refinement of the general upper bound for the Frobenius dimension. We prove that for any algebra of dimension $n\geq 3$, the dimension is bounded by $(n-1)^2$, and we characterize the specific radical square zero algebras that reach this limit. 
This analysis is extended in Section 5, where we specialize these bounds for bound quiver algebras and connected quivers with multiple vertices, providing even sharper constraints based on the combinatorial structure of the quiver. Finally, Section 6 focuses on truncated path algebras, where we introduce the concepts of paths with ``no detours'' and the ``chop'' operation. These tools allow us to derive explicit formulas for nearly Frobenius coproducts and determine the exact Frobenius dimension in these complex settings.

\bigbreak


\section{Preliminaries}
This section establishes the necessary theoretical framework by reviewing the fundamental properties of path algebras and nearly Frobenius structures.
We also introduce an intermediate degree for vertices based on paths of a fixed length, providing a tool to link the combinatorial properties of quivers with the algebraic behavior of their associated algebras.

\subsection{Path algebras}


\begin{defn}
	A {\bf quiver $Q = \bigl(Q_0,Q_1, s, t\bigr)$} is a quadruple consisting of:
	\begin{itemize}
	\item two sets $Q_0$ (whose elements are called vertices)  and $Q_1$ (whose elements are called arrows), 
	\item two maps $s,t:Q_1\to Q_0$ which associate to each arrow $\alpha\in Q_1$ its {\bf source} $s(\alpha)$ and its {\bf target} $t(\alpha)$. 
	\end{itemize}

We denote an arrow of source $a=s(\alpha)$ and target $b=t(\alpha)$ by $\alpha: a \to b$, and a quiver $Q = \bigl(Q_0,Q_1, s, t\bigr)$ simply by $Q$.

\end{defn}

\begin{defn}
	Let $Q = \bigl(Q_0,Q_1, s, t\bigr)$ be a quiver and $a,b\in Q_0$. A {\bf path of length} $l\geq 1$ with source $a$ and target $b$ is a sequence of arrows $\alpha_1\alpha_2\cdots \alpha_l$ such that 
	$s(\alpha_1)=a$, $t(\alpha_k)=s(\alpha_{k+1})$ for each $1 \leq k < l$  and $t(\alpha_l)=b$.

\end{defn}

We denote by $Q_k$ the set of paths of lenght $k \in \mathbb{N}$.
We denote by $C^k$ the cyclic quiver of $k$ vertices. 

\begin{defn}
	Let $Q$  be a quiver. The {\bf path algebra}  $\Bbbk Q$ of $Q$ is the $\k$-algebra whose underlying $\k$-vector space has as its basis the set of all paths of length $l\geq 0$ in $Q$. \\

The product of two paths $\alpha_1\dots\alpha_l$  and $\beta_1\dots\beta_k$
is equal to zero if $t(\alpha_l)\neq s(\beta_1)$ and is equal to
the composed path $\alpha_1\dots\alpha_l\beta_1\dots\beta_k$ if $t(\alpha_l)=s(\beta_1)$. The product of basis elements is, then, extended to arbitrary elements of $\Bbbk Q$ by distributivity.
\end{defn}

\begin{defn}
	Let $Q$ be a quiver and $\Bbbk Q$ the associated path algebra. The {\bf arrow ideal} of $\Bbbk Q$, denoted by $R_Q$, is the two-sided ideal in $\Bbbk Q$ generated by all arrows.
\end{defn}

For any $m \geq 1$, we have that $R_Q^m$ is the two-sided ideal generated by all paths of length $m$ and we have the following chain of ideals: $$R_Q\supseteq R_Q^2 \supseteq R^3_Q \supseteq R_Q^4 \supseteq \dots$$

\begin{defn}
	Let $Q$ be a finite quiver and $R_Q$. A two-sided ideal $I$ of $\k Q$ is said to be {\bf admissible} if there exists $m\geq 2$ such that
	$$R^m_Q\subseteq I\subseteq R^2_Q.$$
	If $I$ is an admissible ideal of $\k Q$, the pair $(Q,I)$ is said to be a {\bf bound quiver.} The quotient algebra $\k Q/I$ is said to be the algebra of the bound quiver $(Q,I)$ or, simply, a {\bf bound quiver algebra}. 
\end{defn}

\begin{defn}
	An algebra $\Bbbk Q/I$ is a {\bf connected algebra} if  $Q$ is a connected quiver, where $Q$ is said to be connected if the underlying
	graph of $Q$ is a connected graph.
\end{defn}

\begin{defn}
	Let $Q$ be a quiver. A {\bf relation} in $Q$ with coefficients in $\k$ is a $\k$-linear combination of paths of length at least two having the same source and target. Thus, a relation $\rho$ is an element of $\k Q$ such that 
	$$\rho=\sum_{i=1}^h\lambda_iw_i,$$
	where $\lambda_i\in \k$ are not all zero and each $w_i$ is a path in $Q$ of length at least $2$ such that $s(w_i) = s(w_j)$ and $t(w_i)=t(w_j)$ for all $i \neq j$.

If $h=1$, the preceding relation is called a {\bf monomial relation}. The algebra $\Bbbk Q/I$ is said to be {\bf monomial} if the ideal $I$ can be generated by monomial relations.
\end{defn}

\begin{defn}
	A {\bf truncated path algebra} is a monomial algebra of the form $\k Q/I$ where $I\subset \k Q$ is the ideal generated by all paths of a fixed length $m$. For $m = 2$ we say that $\k Q/I$ is a radical square zero algebra.
\end{defn}

\begin{defn}\label{def3}
	Let $Q$ be a quiver and $v \in Q_0$ a vertex. The {\bf out-degree} of $v$, denoted by $gr^+(v)$, is the number of arrows leaving $v$ and the {\bf in-degree} of $v$, denoted by $gr^-(v)$, is the number of arrows entering the vertex $v$. 
\end{defn}

\begin{defn}
	A quiver $Q$ is said to be {\bf regular} if $$gr^-(v) = gr^+(v) = gr^+(w) = gr^-(w)$$ for all $v,w \in Q_0$.  In this case we denote by $gr(Q) = gr^+(v)$ for any $v \in Q_0$
\end{defn}

\begin{defn}
	Given an algebra $\Bbbk Q/I$, we define $g\mathcal{P}^{+}(v)$ as the dimension of the subspace of $\Bbbk Q/I$ generated by the paths starting in $v$ and $g\mathcal{P}^{-}(v)$ as the dimension of the subspace of $\Bbbk Q/I$ generated by the paths ending in $v$.
\end{defn}

\begin{rem}
%

If $A$ is a truncated path algebra where $I = R^k$ and $Q$ is regular, then 

$$ g\mathcal{P}^{+}(v) = \sum_{l=1}^{k-1} gr^+(v)^l = g\mathcal{P}^{-}(v). $$

\end{rem}


\subsection{Nearly Frobenius algebras}

In \cite{A96} Abrams proves a theorem that gives a characterization of  Frobenius algebras in terms of the existence of a coassociative counital comultiplication $\Delta:A\rightarrow A\otimes A$ which is a map of $A$-bimodules. In his construction, $\Delta(1)$ can be expressed from a basis and the dual one related to the bilinear form of the Frobenius algebras. Let us recall Abrams theorem.

\begin{thm}\label{TM Abrams}
	An algebra $A$ is a Frobenius algebra if and only if it has a coassociative counital comultiplication $\Delta:A\rightarrow A\otimes A$ which is a map of $A$-bimodules.
\end{thm}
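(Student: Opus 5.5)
The plan is to work throughout with the standard characterization of a Frobenius algebra: a finite dimensional algebra $A$ together with a nondegenerate associative bilinear form $\sigma:A\times A\to\k$, that is, $\sigma(ab,c)=\sigma(a,bc)$. Equivalently, $A$ comes with a linear functional $\varepsilon(a)=\sigma(a,1)=\sigma(1,a)$ such that $\sigma(a,b)=\varepsilon(ab)$. The two directions are handled separately. In each, the bimodule condition is used to reduce everything to the single tensor $\Delta(1)$.

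\emph{($\Rightarrow$)} Choose a basis $\{e_i\}$ of $A$ and its dual basis $\{f_i\}$ with respect to $\sigma$, so that $\sigma(e_i,f_j)=\delta_{ij}$. Then every $x\in A$ satisfies
$$x=\sum_i\sigma(e_i,x)f_i=\sum_i\sigma(x,f_i)e_i .$$
Put $\Delta(1)=\sum_i e_i\ot f_i$ and $\Delta(a)=a\Delta(1)$. The first key step is to show that $\Delta(1)$ is balanced, namely
$$\sum_i ae_i\ot f_i=\sum_i e_i\ot f_ia \quad\text{for all } a\in A.$$
To see this, expand $ae_i$ in the basis $\{e_j\}$ and $f_ia$ in the basis $\{f_j\}$. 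Both sides then become $\sum_{i,j}\sigma(ae_i,f_j)\,e_j\ot f_i$, using associativity of $\sigma$ in the form $\sigma(ae_i,f_j)=\sigma(a,e_if_j)=\sigma(e_i,f_ja)$. Balancedness says exactly that $\Delta$ is a map of $A$-bimodules.

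Next take $\varepsilon$ as the counit. Counitality reduces to $(\varepsilon\ot \mathrm{id})\Delta(1)=\sum_i\sigma(e_i,1)f_i=1$, and symmetrically on the other side, both of which are instances of the expansion formulas above. Coassociativity also reduces to $\Delta(1)$, since both $(\Delta\ot\mathrm{id})\Delta(1)$ and $(\mathrm{id}\ot\Delta)\Delta(1)$ equal $\sum_{i,j}e_i\ot f_ie_j\ot f_j$. To check this, rewrite $\Delta(e_i)=e_i\Delta(1)$ and $\Delta(f_i)=\Delta(1)f_i$ using balancedness, and then rearrange. This index bookkeeping is the only fiddly step in this direction.

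\emph{($\Leftarrow$)} Let $\Delta$ be a coassociative, counital bimodule map with counit $\varepsilon$, and write $\Delta(1)=\sum_{i=1}^N x_i\ot y_i$, which is a finite sum. Since $\Delta$ is a right module map, $\Delta(a)=\Delta(1)a$. Counitality then gives
$$a=(\mathrm{id}\ot\varepsilon)\Delta(a)=\sum_i x_i\,\varepsilon(y_ia).$$
Hence $A$ is spanned by $x_1,\dots,x_N$ and is finite dimensional. Define $\sigma(a,b)=\varepsilon(ab)$, which is clearly associative. For nondegeneracy, suppose $\varepsilon(ba)=0$ for all $b$. The displayed formula then gives $a=0$. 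Since $A$ is finite dimensional, nondegeneracy on one side suffices, although the symmetric formula $a=\sum_i\varepsilon(ax_i)y_i$, coming from $\Delta(a)=a\Delta(1)$ and $(\varepsilon\ot\mathrm{id})$, also gives it directly. Coassociativity is not needed in this direction.

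I expect the main obstacle to be in $(\Rightarrow)$: verifying that $\Delta(1)$ is balanced (equivalently, independent of the chosen basis) and then carefully matching the coassociativity tensors. The converse is a short consequence of counitality combined with the bimodule property.
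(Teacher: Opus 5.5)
The paper does not prove this statement; it recalls it from Abrams' paper and only remarks that $\Delta(1)$ is built from a basis and its dual basis. Your $(\Leftarrow)$ direction is correct as written: the formula $a=\sum_i x_i\varepsilon(y_ia)$ forces finite dimensionality and one-sided nondegeneracy, and coassociativity is indeed not needed.

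Your $(\Rightarrow)$ direction has a genuine error at the balancedness step. With $\sigma(e_i,f_j)=\delta_{ij}$, the tensor $\sum_i e_i\otimes f_i$ is balanced only when $\sigma$ is symmetric. In your justification, the equality $\sigma(a,e_if_j)=\sigma(e_i,f_ja)$ is not associativity. Associativity gives $\sigma(e_i,f_ja)=\sigma(e_if_j,a)$, so you are really using $\varepsilon(ae_if_j)=\varepsilon(e_if_ja)$, which is a trace condition.

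Here is a concrete counterexample. Take the radical square zero algebra of $1\xrightarrow{\alpha}2\xrightarrow{\beta}1$ with $\varepsilon(\alpha)=\varepsilon(\beta)=1$ and $\varepsilon(e_1)=\varepsilon(e_2)=0$. Its Gram matrix is a permutation matrix, so it is Frobenius, but the form is not symmetric. For the basis $(e_1,e_2,\alpha,\beta)$, the dual basis is $(\alpha,\beta,e_2,e_1)$. Your element $\Delta(1)=e_1\otimes\alpha+e_2\otimes\beta+\alpha\otimes e_2+\beta\otimes e_1$ then satisfies $e_1\Delta(1)=e_1\otimes\alpha+\alpha\otimes e_2$ but $\Delta(1)e_1=e_2\otimes\beta+\beta\otimes e_1$. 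Counitality also fails: $(\varepsilon\otimes\mathrm{id})(e_1\Delta(1))=e_2\neq e_1$.

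Incidentally, balancedness is not equivalent to basis independence. The element $\sum_i e_i\otimes f_i$ is always basis independent; the real issue is which side the identification $A\cong A^*$ is made on. As written, your argument proves $(\Rightarrow)$ only for symmetric Frobenius algebras.

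The repair is to reverse the factors and put $\Delta(1)=\sum_i f_i\otimes e_i$. Use $af_i=\sum_j\sigma(e_j,af_i)f_j$ and $e_ia=\sum_j\sigma(e_ia,f_j)e_j$. After renaming indices, both $a\Delta(1)$ and $\Delta(1)a$ become $\sum_{i,j}\sigma(e_ja,f_i)\,f_j\otimes e_i$. This step now uses only the genuine associativity $\sigma(e_j,af_i)=\sigma(e_ja,f_i)$.

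Counitality still holds, since $\sum_i\varepsilon(f_i)e_i=\sum_i\sigma(1,f_i)e_i=1$ and $\sum_i f_i\varepsilon(e_i)=\sum_i\sigma(e_i,1)f_i=1$. Your coassociativity computation works verbatim for any balanced $\Delta(1)$: both iterated coproducts equal $\sum_{i,j}f_j\otimes e_jf_i\otimes e_i$. With this change your proof is complete.
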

We will now recall the definition of nearly Frobenius algebras that is a weakening of Frobenius algebras in the sense of Abrams characterization.

\begin{defn}
	An associative $\k$-algebra $A$ is a {\bf nearly Frobenius algebra} if it admits an homomorphism $\Delta:A\rt A\ot A$ of $A$-bimodules.

	Let $(A,\Delta_A)$ and $(B,\Delta_B)$ be two nearly Frobenius algebras. An homomorphism $f:A\rightarrow B$ is a {\bf nearly Frobenius homomorphism} if it is a morphism of $\k$-algebras and the following diagram commutes.
	$$\xymatrix{A\ar[r]^{f}\ar[d]_{\Delta_A}& B\ar[d]^{\Delta_B}\\
	A\otimes_R A\ar[r]_{f\otimes f}&B\otimes_R B	}.$$

\end{defn}

\begin{defn}
	 The {\bf Frobenius space} associated to an algebra $A$ is the vector space of all the possible
	coproducts $\Delta$ that make it into a nearly Frobenius algebra $\mathscr{E}_A$. Its dimension over $\k$
	is called  \emph{Frobenius dimension} of $A$, that is,
	$$\operatorname{Frobdim}A=\operatorname{dim}_\k\mathscr{E}_A$$
\end{defn}

In Corollary 10 of \cite{{AGM22}} the particular case where $A$ is a Frobenius algebra is considered.\\
It is proven that if $A$ is a Frobenius algebra, then $$\operatorname{dim}_\k A=\operatorname{Frobdim}A.$$
As a straightforward consequence, the computation of the $\operatorname{Frobdim}A$ can be a useful tool to conclude that an algebra is not Frobenius.

\section{Classification of low-dimensional nearly Frobenius algebras }
In this section, we provide a systematic classification of nearly Frobenius structures in dimensions 2 and 3. By determining their explicit coproducts and Frobenius dimensions, we illustrate the variety of behaviors this invariant can exhibit in both commutative and non-commutative settings, providing a catalog of examples that motivate the general bounds discussed later.
\subsection{Dimension two:} In this case $A$ is a commutative algebra and we can choose $\mathscr{B}=\{1,v\}$ basis of $A$.
Let $\Delta$ be a possible nearly Frobenius coproduct $$\Delta(1)=x_1 1\otimes 1+x_2 1\otimes v+x_3 v\otimes 1 +x_4 v\otimes v.$$
We write $v^2=a1+bv$, for $a,b\in\k$. Using that $\Delta$ is a bimodule morphism and applying it in $v$ we obtain the following equations
	$$\begin{array}{rcl}
		ax_3-ax_2&=&0\\
		x_1+bx_2-ax_4&=&0\\
		x_1+bx_3-ax_4&=&0\\
		x_2-x_3&=&0
	\end{array}$$
	Then
	$$\Delta(1)=x_4\bigl(a1\otimes 1+v\otimes v\bigr)+x_3\bigl(1\otimes v+v\otimes 1-b1\otimes 1\bigr).$$
	Therefore $\Delta_1(1)=a1\otimes 1+v\otimes v$ and $\Delta_2(1)=1\otimes v+v\otimes 1-b1\otimes 1$ form a basis of $\mathscr{E}_A$, in particular $\operatorname{Frobdim}_\k(A)=2$.\\
	
	Note that if we take $\Delta(1)=\alpha\bigl(a1\otimes 1+v\otimes v\bigr)$ and define $\varepsilon:A\to\k$ as $\displaystyle{\varepsilon(1)=\frac{1}{a\alpha}}$ and $\varepsilon(v)=0$ then $(A,\varepsilon)$ is a Frobenius algebra for all $a,\alpha\in\k^{\times}$.

\subsection{Dimension three:}
	We will divide this problem into two cases.
	\begin{enumerate}
		\item[(a)] Suppose that there is a basis $\mathcal{B}=\bigl\{1,v,v^2\bigr\}$ of $A$. In particular, this implies that $A$ is a commutative algebra. Let us write $v^3=a1+bv+cv^2$, with $a,b,c\in\k$ and consider  a nearly Frobenius coproduct 
		$$\Delta(1)=x_1 1\otimes 1+x_2 1\otimes v+x_3 v\otimes 1+ x_4 v\otimes v+ x_5 1\otimes v^2 +x_6 v^2\otimes 1$$
		$$x_7 v\otimes v^2 + x_8 v^2\otimes v + x_9 v^2\otimes v^2,$$
		where $x_1,\dots, x_9$ are the variables to be determined.\\
		Since the coproduct $\Delta$ is a bimodule morphism we obtain the following equations that relate its coefficients.
		\begin{multicols}{2}
			\begin{equation}
				ax_5-ax_6=0
			\end{equation}
			\begin{equation}
				x_1+bx_5-ax_8=0
			\end{equation}
			\begin{equation}
				x_1+bx_6-ax_7=0
			\end{equation}
			\begin{equation}
				x_2-x_3-bx_7+bx_8=0
			\end{equation}
			\begin{equation}
				x_2+cx_5-ax_9=0
			\end{equation}
			\begin{equation}
				x_3+cx_6-ax_9=0
			\end{equation}
			\begin{equation}
				x_4-x_5+cx_7-bx_9=0
			\end{equation}
			\begin{equation}
				x_4-x_6+cx_8-bx_9=0
			\end{equation}
			\begin{equation}
				x_7-x_8=0
			\end{equation}
		\end{multicols}
		
		Solving the linear system we obtain that
		
		$$\left\{\begin{array}{rcl}
			x_1&=&-bx_6+ax_8\\
			x_2&=&-cx_6+ax_9\\
			x_3&=&-cx_6+ax_9\\
			x_4&=&x_6-cx_8+bx_9\\
			x_5&=&x_6\\
			x_7&=&x_8
		\end{array}\right.$$
		Then
		$$ \Delta(1)=x_6\bigl(-b 1\otimes 1-a\bigl(1\otimes v+v\otimes 1\bigr)+v\otimes v+1\otimes v^2+v^2\otimes 1\bigr)+x_8\bigl(a1\otimes 1-cv\otimes v+v\otimes v^2+v^2\otimes v\bigr)$$
		$$+x_9\bigl(a\bigl(1\otimes v+v\otimes 1\bigr)+bv\otimes v+v^2\otimes v^2\bigr)$$
		In particular  $\operatorname{Frobdim}_\k(A)=3$.
		
		\item[(b)] Next, suppose that $1$, $u$ and $u^2$ are linearly dependent for any $u\in A$. Let $\bigl\{1,v,w\bigr\}$ be a linear basis of $A$, then, if we write $u=\alpha 1+\beta v+\gamma w$ and $u^2=\alpha'1+\beta'v+\gamma'w$, it must be fulfilled that $\beta\gamma'-\gamma\beta'=0$, for all $\beta,\gamma\in\k$. \\
		On the other hand, if we write $v^2=a'1+av$, $w^2=b'1+bw$, $vw=c'+c_1v+c_2w$ and $wv=d'1+d_1v+d_2w$ with all coefficient in $\k$, using the associativity of the product, we have that:
		$$\left\{\begin{array}{rcl}
			a'&=& c_2(c_2-a)=d_2(d_2-a)\\
			b'&=& c_1(c_1-b)=d_1(d_1-b)\\
			c'&=& -c_1c_2\\
			d' &=& -d_1d_2
		\end{array}\right.$$
		Finally, writing $u^2$ in terms of $\alpha$, $\beta$ and $\gamma$ we have that 
		$$\beta\gamma'-\gamma\beta'=((c_2+d_2)-a)\beta^2\gamma+(b-(c_1+d_1))\beta\gamma^2=0,\;\forall \beta,\alpha\in\k \Rightarrow\left\{\begin{array}{rcl}a&=&c_2+d_2\\b&=&c_1+d_1\end{array}\right.$$
		Then the product is determined by the following relations
		$$\begin{array}{rcl}
			v^2&=&-c_2d_21+(c_2+d_2)v\\
			w^2&=&-c_1d_11+(c_1+d_1)w\\
			vw&=&-c_1c_21+c_1v+c_2w\\
			wv&=&-d_1d_21+d_1v+d_2w
		\end{array}$$
		
		This allows us to establish the equations associated with the nearly Frobenius coproduct
		$$\Delta(1)=x_1 1\otimes 1+x_2 1\otimes v+x_3 1\otimes w+ x_4 v\otimes 1+ x_5 v\otimes v +x_6 v\otimes w$$
		$$x_7 w\otimes 1 + x_8 w\otimes v + x_9 w\otimes w$$
		$\Delta(v)=v\Delta(1)=\Delta(1)v$ implies\\
		$$\begin{array}{l}
			\displaystyle{c_2d_2x_2+d_1d_2x_3-c_2d_2x_4-c_1c_2x_7}=0\\
			\displaystyle{	x_1+(c_2+d_2)x_2+d_1x_3+c_2d_2x_5+c_1c_2x_8}=0\\
			\displaystyle{	d_2x_3+c_2d_2x_6+c_1c_2x_9}=0\\
			\displaystyle{x_1+(c_2+d_2)x_4+c_2d_2x_5+d_1d_2x_6+c_1x_7}=0\\
			\displaystyle{x_2-x_4-d_1x_6+c_1x_8}=0\\
			\displaystyle{x_3+c_2x_6+c_1x_9}=0\\
			\displaystyle{c_2x_7+c_2d_2x_8+d_1d_2x_9}=0\\
			\displaystyle{x_7+d_2x_8+d_1x_9}=0\\
			\displaystyle{(c_2-d_2)x_9}=0\\
		\end{array}$$
		On the other hand, $\Delta(w)=w\Delta(1)=\Delta(1)w$ implies\\
		$$\begin{array}{l}
			\displaystyle{c_1c_2x_2+c_1d_1x_3-d_1d_2x_4-c_1d_1x_7}=0\\
			\displaystyle{x_1+c_2x_2+(c_1+d_1)x_3+d_1d_2x_6+c_1d_1x_9}=0\\
			\displaystyle{c_1x_2+d_1d_2x_5+c_1d_1x_8}=0\\
			\displaystyle{d_1x_4+c_1c_2x_5+c_1d_1x_6}=0\\
			\displaystyle{(c_1-d_1)x_5}=0\\
			\displaystyle{x_4+c_2x_5+c_1x_6}=0\\
			\displaystyle{x_1+d_2x_4+(c_1+d_1)x_7+c_1c_2x_8+c_1d_1x_9}=0\\
			\displaystyle{x_3+d_2x_6-x_7-c_2x_8}=0\\
			\displaystyle{x_2+d_2x_5+d_1x_8}=0
		\end{array}$$
	$\bullet$	In the commutative case, that is $c_1=d_1$ and $c_2=d_2$, the relationship between the coefficients is as follows
		$$\left\{\begin{array}{rcl}
			x_1&=&c_2^2(x_5+x_9)+c_1c_2(x_6+x_8)\\
			x_2&=&-(c_2x_5+c_1x_8)\\
			x_3&=&-(c_2x_6+c_1x_9)\\
			x_4&=&-(c_2x_5+c_1x_6)\\
			x_7&=&-(c_2x_8+c_1x_9)
		\end{array}\right.$$
		Then
		$$\Delta(1)=x_5 \bigl(c_2^21\otimes 1-c_21\otimes v-c_2v\otimes 1+v\otimes v\bigr)+x_6\bigl(c_1c_21\otimes 1+v\otimes w-c_21\otimes w-c_1v\otimes 1\bigr)$$$$+x_8\bigl(c_1c_21\otimes 1-c_11\otimes v+w\otimes v-c_2w\otimes 1\bigr)+ x_9\bigl(c_2^21\otimes 1-c_11\otimes w-c_1w\otimes 1+w\otimes w\bigr).$$
		In particular  $\operatorname{Frobdim}_\k(A)=4$.\\
		
		Note that this family of algebras admits a basis of the form $\{1, v', w'\}$ where $(v')^2=(w')^2 = v'w'= 0$. It suffices to make the change of variables 
		$$\left\{\begin{array}{c}
			v' = v - c_21\\
			w' = w - c_11,
		\end{array}\right.$$ and this implies that $$v'w' = (v-c_21)(w-c_11)=vw-c_1v-c_2w+c_1c_21=0.$$   This is in accordance with the classification described in \cite{KST21}.\\
		
		
	$\bullet$	In the noncommutative case, that is $c_1\neq d_1$ (and) or $c_2\neq d_2$, the relationships between the coefficients are as follows
		$$\left\{\begin{array}{rcl}
			x_1&=&(c_1c_2-d_1d_2)x_6\\
			x_2&=&-d_1x_8\\
			x_3&=&-c_2x_6\\
			x_4&=&-c_1x_6\\
			x_5&=&0\\
			x_6&=&-x_8\\
			x_7&=&-d_2x_8\\
			x_9&=&0
		\end{array}\right.$$
		Then
		$$\Delta(1)=x_6\bigl[(c_1c_2-d_1d_2)1\otimes 1+d_11\otimes v-c_21\otimes w-c_1v\otimes 1+v\otimes w+d_2w\otimes 1-w\otimes v\bigr]$$
		In particular  $\operatorname{Frobdim}_\k(A)=1$.\\
		
		As in the previous case we can make a change of variables such that in the new basis $\{1,u, u'\}$ the product of $A$ is $u^2=1$, $(u')^2=0$, $uu'=u'$ and $u'u=-u'$.\\
		If $c_1=d_1$ and $c_2\neq d_2$ we define 
		$$\left\{\begin{array}{lcl}
			v'=v-\frac{c_2+d_2}{2}1&\Rightarrow &(v')^2=\left(\frac{c_2-d_2}{2}\right)^2\Rightarrow u=\frac{2v'}{c_2-d_2}\Rightarrow u^2=1\\
			u'=w-c_11&\Rightarrow& (u')^2=0
		\end{array}\right.$$
		With this change of variables, we can see that  $uu'= u'$ and $u'u=-u'$.	
		
		If $c_1\neq d_1$ and $c_2= d_2$ we define 
		$$\left\{\begin{array}{lcl}
			u'=v-c_21&\Rightarrow& (u')^2=0\\
			w'=w-\frac{c_1+d_1}{2}1&\Rightarrow &(w')^2=\left(\frac{c_1-d_1}{2}\right)^2\Rightarrow u=\frac{2w'}{c_1-d_1}\Rightarrow u^2=1
		\end{array}\right.$$
		
		As before, with this change of variable, we can see that  $uu'= u'$ and $u'u=-u'$.	
		
		Finally, if $c_1\neq d_1$ and $c_2\neq d_2$ we define 
		$$\left\{\begin{array}{lcl}
			v'=v-\frac{c_2+d_2}{2}1&\Rightarrow &(v')^2=\left(\frac{c_2-d_2}{2}\right)^2\Rightarrow u=\frac{2v'}{c_2-d_2}\Rightarrow u^2=1\\
			w'=w-\frac{c_1+d_1}{2}1&\Rightarrow &(w')^2=\left(\frac{c_1-d_1}{2}\right)^2\Rightarrow w''=\frac{2w'}{c_1-d_1}\Rightarrow (w'')^2=1
		\end{array}\right.$$
		The last change of variable is $u'=u+w''\Rightarrow (u')^2=0$ and $uu'=u'$.
	\end{enumerate}
\begin{sidewaystable}[p]\label{table 1}
\begin{center}
	\begin{tabular}{| c | c | c |c|}
	\hline
	&&&\\
	$A$ & A basis of $\mathscr{E}_A$ & $\operatorname{Frobdim}(A)$& Frobenius algebra\\ 
	\hline
	&&&\\
	$\operatorname{dim}_\k(A)=2$ & $\{\Delta_1, \Delta_2\}\xrightarrow{b}\mathscr{E}_A$ & &Yes,  \\
	$\{1,v\}\xrightarrow{b}A:$ & $\de_1(1)=a1\ot 1+v\ot v$ & 2& If $a\neq 0$:  $\varepsilon(v)=0$, $\varepsilon(1)=a^{-1}\Rightarrow (A,\Delta_1,\varepsilon)$ FA\\
	$v^2=a1+bv$ & $\de_2(1)=1\ot v+v\ot 1-b1\ot 1$ &&If $a= 0$:  $\varepsilon(1)=0$, $\varepsilon(v)=1 \Rightarrow (A,\Delta_2,\varepsilon)$ FA\\ 
	\hline
	&&&\\
	If $\operatorname{dim}_\k(A)=3$ & $\{\Delta_1, \Delta_2, \de_3\}\xrightarrow{b}\mathscr{E}_A$ && Yes\\
	and $\bigl\{1,v, v^2\bigr\}\xrightarrow{b}A:$ & $\de_1(1)=1\ot v^2+v^2\ot 1+v\ot v-a(v\ot 1+1\ot v)$ & 3&  $\varepsilon(1)=0$, $\varepsilon(v)=0$ and $\varepsilon(v^2)=1$ \\
	$v^3=a1+bv+cv^2$ & $-b1\ot 1$ &&\\
	&$\de_2(1)=v^2\ot v+v\ot v^2-a v\ot v +c1\ot 1$&&$(A,\Delta_1, \varepsilon$) Frobenius algebra\\
	&$\de_3(1)=v^2\ot v^2+bv\ot v+c(v\ot 1+ 1\ot v)$&&\\
	 \hline
	 &&&\\
	 If $\operatorname{dim}_\k(A)=3$ & $\{\Delta_1, \Delta_2, \de_3, \de_4\}\xrightarrow{b}\mathscr{E}_A$ & &No\\
	  $\bigl\{1,u, u^2\bigr\}$ is LD $\forall u\in A$&  & &\\
	 $\{1,v,w\}\xrightarrow{b}A$ & $\de_1(1)=c_2^21\otimes 1-c_21\otimes v-c_2v\otimes 1+v\otimes v$ &4&\\
	 commutative case& $\de_2(1)=c_1c_21\otimes 1+v\otimes w-c_21\otimes w-c_1v\otimes 1$ &&\\
	 $v^2=-c_2^2+2c_2v$ & $\de_3(1)=c_1c_21\otimes 1-c_11\otimes v+w\otimes v-c_2w\otimes 1$ &&$\operatorname{dim}_\k(A)=3<4=\operatorname{Frobdim}(A)$\\
	 $w^2=-c_1^2+2c_1w$& $\de_4(1)=c_2^21\otimes 1-c_11\otimes w-c_1w\otimes 1+w\otimes w$&&\\
	 $vw=-c_1c_21+c_1v+c_2w$&&&\\
	 \hline
	 &&&\\
	 If $\operatorname{dim}_\k(A)=3$ & $\{\Delta_1\}\xrightarrow{b}\mathscr{E}_A$ & &\\
	 $\bigl\{1,u, u^2\bigr\}$ is LD $\forall u\in A$&  & 1& No\\
	 $\{1,v,w\}\xrightarrow{b}A$ & $\de_1(1)=(c_1c_2-d_1d_2)1\otimes 1+d_11\otimes v-c_21\otimes w$ &&\\
	 non commutative case& $-c_1v\otimes 1+v\otimes w+d_2w\otimes 1-w\otimes v$ &&\\
	 $v^2=-c_2d_21+(c_2+d_2)v$ &  &&$\operatorname{Frobdim}(A)=1<3=\operatorname{dim}_\k(A)$\\
	 $w^2=-c_1d_11+(c_1+d_1)w$& &&\\
	 $vw=-c_1c_21+c_1v+c_2w$&&&\\
	 $wv=-d_1d_21+d_1v+d_2w$&&&\\
	 \hline
\end{tabular}
\caption{Classification in low dimension }
\label{tab:clasificación}
\end{center}
\end{sidewaystable}
\newpage
\section{Bounds on Frobenius dimension}

This section is devoted to the core theoretical objective of this work: refining the global bound for the Frobenius dimension. We demonstrate that for any algebra of dimension $n\geq 3$, the Frobenius dimension is strictly bounded by $(n-1)^2$, and we characterize the specific families of algebras that attain this maximum value.

\begin{prop} \label{uv=0}
	Let $A$ be a $\k$-algebra with unit $1$ and $\mathscr{B}=\bigl\{1, v_2, \dots, v_n\bigr\}$ basis of $A$ as a $\k$-vector space such that $v_iv_j=0$ for all $i,j=2, \dots, n$, $n\geq 3$. Then $$\operatorname{Frobdim}_\k(A)=(n-1)^2.$$
	\end{prop}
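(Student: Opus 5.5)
Since $\Delta$ is a morphism of $A$-bimodules and $A$ is unital, $\Delta$ is completely determined by $\Delta(1)$. The only condition on $\Delta(1)$ is $a\Delta(1)=\Delta(1)a$ for all $a\in A$; once this holds, setting $\Delta(a):=a\Delta(1)$ gives a bimodule map. Hence $\mathscr{E}_A$ is isomorphic to the centralizer $\{x\in A\otimes A : ax=xa \ \forall a\in A\}$, viewed as an $A$-bimodule via left and right multiplication on the respective tensor factors. Because $1$ commutes with everything, it suffices to impose $v_k x = x v_k$ for $k=2,\dots,n$. The plan is to write
$$x=x_{11}\,1\otimes 1+\sum_{j\ge 2}x_{1j}\,1\otimes v_j+\sum_{i\ge 2}x_{i1}\,v_i\otimes 1+\sum_{i,j\ge 2}x_{ij}\,v_i\otimes v_j$$
and compute both products using $v_kv_j=0=v_jv_k$.

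Left multiplication by $v_k$ acts on the first factor:
$$v_kx=x_{11}\,v_k\otimes 1+\sum_{j\ge 2}x_{1j}\,v_k\otimes v_j,$$
since every term whose first factor is some $v_i$ is killed. Right multiplication acts on the second factor:
$$xv_k=x_{11}\,1\otimes v_k+\sum_{i\ge 2}x_{i1}\,v_i\otimes v_k.$$
Next we compare coefficients in the basis $\{b\otimes b'\}$ of $A\otimes A$. The term $v_k\otimes 1$ appears only on the left, so $x_{11}=0$. The term $1\otimes v_k$ appears only on the right, which gives the same condition. For the terms $v_i\otimes v_j$ with $i,j\ge 2$, the left side contributes $x_{1j}$ when $i=k$, and the right side contributes $x_{i1}$ when $j=k$. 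Equating gives $x_{1j}=0$ for $j\ne k$ (take $i=k$), $x_{i1}=0$ for $i\ne k$ (take $j=k$), and $x_{1k}=x_{k1}$ when $i=j=k$.

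This is where $n\ge 3$ is used. Each $j\ge 2$ differs from some other $k\ge 2$, so applying the conditions for that $k$ forces $x_{1j}=0$ and $x_{j1}=0$ for all $j\ge 2$. The coefficients $x_{ij}$ with $i,j\ge 2$ never appear in either product, so they are completely free. Conversely, every element $\sum_{i,j\ge2}x_{ij}v_i\otimes v_j$ commutes with each $v_k$, since both products vanish. Therefore $\mathscr{E}_A\cong \operatorname{span}\{v_i\otimes v_j: i,j\ge 2\}$, which has dimension $(n-1)^2$.

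The only delicate point is the bookkeeping that shows the mixed coefficients $x_{1j}$ and $x_{j1}$ vanish. This needs two distinct non-identity basis vectors, which explains the hypothesis $n\ge 3$; for $n=2$ one gets the extra solution $1\otimes v+v\otimes 1$, in agreement with the dimension-two computation above.
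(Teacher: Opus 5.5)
Your proposal is correct and follows essentially the same route as the paper. You expand $\Delta(1)$ in the basis $\mathscr{B}\otimes\mathscr{B}$, impose $v_k\Delta(1)=\Delta(1)v_k$ using $v_iv_j=0$ to get $x_{11}=0$, $x_{1j}=0$ for $j\neq k$, $x_{i1}=0$ for $i\neq k$ and $x_{1k}=x_{k1}$, and then use a second index $h\neq k$ (available since $n\geq 3$) to kill the remaining mixed coefficients. You also make explicit the converse check that every $\sum_{i,j\geq 2}x_{ij}\,v_i\otimes v_j$ commutes with each $v_k$, which the paper leaves implicit.
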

	\begin{proof}
Let	$\bigl\{1\otimes 1, 1\otimes v_i, v_i\otimes 1, v_i\otimes v_j: i,j=1,\dots, n\bigr\}$ be a basis of $A\otimes A$.	 Then
$$\Delta(1)=\alpha 1\otimes 1+\sum_{i=2}^n\alpha_i1\otimes v_i+\sum_{j=2}^n\beta_jv_j\otimes 1+\sum_{i,j=2}^n\alpha_{ij}v_i\otimes v_j$$
If $\Delta$ defines a nearly Frobenius coproduct then the following relation must be satisfied:
$$v_k\Delta(1)=\Delta(1)v_k$$ 
$$v_k\Delta(1)=\alpha v_k\otimes 1+\sum_{i=2}^n\alpha_iv_k\otimes v_i+\sum_{j=2}^n\beta_jv_kv_j\otimes 1+\sum_{i,j=2}^n\alpha_{ij}v_kv_i\otimes v_j=\alpha v_k\otimes 1+\sum_{i=2}^n\alpha_iv_k\otimes v_i$$
$$\Delta(1)v_k=\alpha 1\otimes v_k+\sum_{i=2}^n\alpha_i1\otimes v_iv_k+\sum_{j=2}^n\beta_jv_j\otimes v_k+\sum_{i,j=2}^n\alpha_{ij}v_i\otimes v_jv_k=\alpha 1\otimes v_k+\sum_{j=2}^n\beta_jv_j\otimes v_k$$
Then $\alpha=0$, $\alpha_i=0$ for all $i\neq k$, $\beta_j=0$ for all $j\neq k$ and $\alpha_k=\beta_k$.\\
Therefore
$$\Delta(1)=\alpha_k\bigl(1\otimes v_k+v_k\otimes 1\bigr)+\sum_{i,j=2}^n\alpha_{ij}v_i\otimes v_j$$
Since $n\geq 3$ there exists $v_h\neq v_k$ then $\alpha_k=0$. Finally
$$\Delta(1)=\sum_{i,j=2}^n\alpha_{ij}v_i\otimes v_j \Rightarrow \operatorname{Frobdim}_\k(A)=(n-1)^2.$$
\end{proof}	

The following example shows that Proposition \ref{uv=0} does not hold for $n=2$. 

\begin{example}
	Consider the radical square zero algebra $A=\Bbbk Q/J^2$, where
	$$Q:\xymatrix{\bullet\ar@(ul, ur)[]^{\alpha}}$$
	Upon reexamining the proof of Proposition \ref{uv=0} in the context of this case, it becomes evident that the coproducts must exhibit the following structure
	$$\Delta(1) = a\bigl(1\otimes \alpha + \alpha\otimes 1\bigr) + b\bigl(\alpha\otimes\alpha\bigr),\quad a,b\in \k$$ 
	then $\operatorname{Frobdim}_\k(A) = 2 > 1^2$.
\end{example}

The following result characterizes the algebras that satisfy the property presented in Proposition \ref{uv=0}.

\begin{prop}\label{prop radical^2}
	$A$ is a $\k$-algebra with unit $1$ such that  there exists $\mathscr{B}=\bigl\{1, v_2, \dots, v_n\bigr\}$ basis of $A$ as a $\k$-vector space with $v_iv_j=0$ for all $i,j=2, \dots, n$ if and only if $A$ is a radical square  zero algebra with quiver 
	$$Q:\xymatrix{
		\underset{\dots}{\bullet}  \ar@(ul,dl)_{\alpha_n}\ar@(ur,dr)[]^{\alpha_3}\ar@(ul, ur)[]^{\alpha_2}
		}$$
	$Q_0=\{\bullet_1\}$, $Q_1=\{\alpha_2, \alpha_3,\dots, \alpha_n\}$.	
	
\end{prop}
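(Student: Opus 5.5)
Both directions come from identifying $A$ with $\k Q/R_Q^2$ for the one-vertex quiver $Q$ with $n-1$ loops $\alpha_2,\dots,\alpha_n$. In that algebra the paths of length $\geq 2$ vanish, so $\{e_1=1,\alpha_2,\dots,\alpha_n\}$ is a $\k$-basis.

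\emph{($\Leftarrow$).} Suppose $A=\k Q/R_Q^2$ with $Q$ as in the statement.

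\begin{itemize}
\item The single trivial path $e_1$ is the unit of $A$.
\item Any product $\alpha_i\alpha_j$ is a path of length $2$, hence lies in $R_Q^2$ and is zero in $A$.
\end{itemize}

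Taking $v_i=\alpha_i$ gives the required basis $\mathscr{B}=\{1,v_2,\dots,v_n\}$ with $v_iv_j=0$ for all $i,j$.

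\emph{($\Rightarrow$).} Let $\mathscr{B}=\{1,v_2,\dots,v_n\}$ be a basis with $v_iv_j=0$ for all $i,j\ge 2$, and set $J=\operatorname{span}_\k\{v_2,\dots,v_n\}$.

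\begin{enumerate}
\item \textbf{$J$ is a square-zero ideal.} For $a=\lambda 1+x$ with $x\in J$ and any $v_i$, we have $av_i=\lambda v_i\in J$ and $v_ia=\lambda v_i\in J$. Hence $J$ is a two-sided ideal. Bilinearity of $v_iv_j=0$ gives $J^2=0$.
\item \textbf{$J$ is the Jacobson radical.} Since $J^2=0$, $J$ is nilpotent, so $J\subseteq \operatorname{rad}A$. Also $A/J\cong\k$ is semisimple, so $\operatorname{rad}A\subseteq J$. Thus $\operatorname{rad}A=J$ and $(\operatorname{rad}A)^2=0$.
\item \textbf{The quiver of $A$.} $A$ is local with $A/\operatorname{rad}A\cong\k$, so it is basic and its quiver $Q_A$ has a single vertex. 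The number of loops equals
\[
\dim_\k\bigl(\operatorname{rad}A/(\operatorname{rad}A)^2\bigr)=\dim_\k J=n-1 .
\]
So $Q_A$ is exactly the quiver $Q$ of the statement.
\item \textbf{The explicit isomorphism.} Define $\varphi:\k Q\to A$ by $e_1\mapsto 1$ and $\alpha_i\mapsto v_i$. This extends to an algebra morphism by the universal property of path algebras, since there is only one vertex and no compatibility conditions to check.
  \begin{itemize}
  \item $\varphi$ is surjective because its image contains the basis $\mathscr{B}$.
  \item $\varphi(R_Q^2)=0$, since every path of length $\ge 2$ maps to a product $v_{i_1}v_{i_2}\cdots=0$.
  \end{itemize}
  Hence $\varphi$ factors through $\bar\varphi:\k Q/R_Q^2\to A$. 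Both spaces have dimension $1+(n-1)=n$, so $\bar\varphi$ is a surjection between spaces of equal dimension and therefore an isomorphism.
\end{enumerate}

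This proves the equivalence. The only point that needs care is checking that $\mathrm{span}\{v_i\}$ is a two-sided ideal. That relies on every element of $A$ being a scalar multiple of $1$ plus an element of the span, which is immediate here because $1\in\mathscr{B}$.
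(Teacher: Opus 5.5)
Your proof is correct and rests on the same idea as the paper: the linear identification $1\leftrightarrow e_1$, $v_i\leftrightarrow\alpha_i$ is an algebra isomorphism $A\cong \k Q/R_Q^2$. The paper verifies multiplicativity of $F:A\to \k Q/R_Q^2$ directly on general elements. You instead obtain $\k Q\to A$ from the universal property of the free algebra on the loops and conclude by counting dimensions. Your steps 1--3 ($J=\operatorname{rad}A$ and the Gabriel quiver) are not needed for the equivalence, but they justify that $Q$ is intrinsically the quiver of $A$.
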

\begin{proof}
For the converse $\displaystyle{A=\frac{\k Q}{\bigl\langle \alpha_i\alpha_j: i,j=2,\dots, n\bigr\rangle }}$ then $\mathscr{B}=\bigl\{1,\alpha_2, \alpha_3,\dots, \alpha_n\bigr\}$ is a basis of $A$ as a $\k$-vector space with $\alpha_i\alpha_j=0$ for all $i,j=2, \dots, n$.\\

For the direct, we naturally have a bijection $F:A \to \displaystyle{\frac{\k Q}{\bigl\langle \alpha_i\alpha_j: i,j=2,\dots, n\bigr\rangle }}$ 
given by $1\mapsto e_1 $, $v_i\mapsto \alpha_i$ for all $i=2,\dots, n$.\\
Let $u,v\in A$,  $\displaystyle{u=a 1+\sum_{i=2}^na_iv_i}$ and $\displaystyle{v=b 1+\sum_{j=2}^n b_jv_j \Rightarrow uv=ab 1+\sum_{j=2}^n ab_jv_j +\sum_{i=2}^n a_i b v_i}$.

Therefore, $$\displaystyle{F(uv)=F\bigl(ab 1+\sum_{j=2}^n ab_jv_j +\sum_{i=2}^n a_ib v_i\bigr)=ab e_1+\sum_{j=2}^n ab_j\alpha_j+\sum_{i=2}^n a_ib\alpha_i}$$
$$F(u)=F\bigl(a 1+\sum_{i=2}^n a_iv_i\bigr)=a e_1+\sum_{i=2}^n a_i\alpha_i$$
$$F(v)=F\bigl(b 1+\sum_{j=2}^n b_jv_j\bigr)=b e_1+\sum_{j=2}^n b_j\alpha_j$$
$$\Rightarrow F(u)F(v)=\left(a e_1+\sum_{i=2}^n a_i\alpha_i\right)\left(b e_1+\sum_{j=2}^n b_i\alpha_j\right)=ab e_1+ \sum_{j=2}^n
 ab_i\alpha_j+\sum_{i=2}^n a_ib\alpha_i$$
 $$\Rightarrow F(uv)=F(u)F(v)\; \forall u,v\in A.$$

\end{proof}

Combining the previous propositions, we obtain the following result.
\begin{cor}\label{p7}
If $\displaystyle{A=\k Q/R^2}$ is a radical square  zero algebra with quiver $$Q:\xymatrix{
		\underset{\dots}{\bullet}  \ar@(ul,dl)_{\alpha_n}\ar@(ur,dr)[]^{\alpha_3}\ar@(ul, ur)[]^{\alpha_2}
	}$$ where
	$Q_0=\{\bullet_1\}$, $Q_1=\{\alpha_2, \alpha_3,\dots, \alpha_n\}$,  then $\operatorname{Frobdim}_\k(A)=(n-1)^2$.
	\end{cor}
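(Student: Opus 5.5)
The corollary follows by chaining Proposition \ref{prop radical^2} (the converse direction) with Proposition \ref{uv=0}. First, fix $A=\k Q/R^2$ with $Q$ the single-vertex quiver carrying the loops $\alpha_2,\dots,\alpha_n$. In this algebra every path of length at least two is zero, so $\alpha_i\alpha_j=0$ for all $i,j=2,\dots,n$. The paths of length $0$ and $1$ are the only nonzero paths, and they are linearly independent in the quotient. Since $e_1$ is the unit of $A$, the set $\mathscr{B}=\{1=e_1,\alpha_2,\dots,\alpha_n\}$ is a $\k$-basis of $A$ of cardinality $n$. This is exactly the content of the converse direction of Proposition \ref{prop radical^2}; the only extra point to note is that $R^2$ coincides with the ideal $\langle\alpha_i\alpha_j\rangle$ used there, since $R^2$ is generated by the paths of length two.

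Second, apply Proposition \ref{uv=0} to this basis with $v_i=\alpha_i$. It requires $n\geq 3$, which is implicit here since the corollary is stated in the context of that proposition. Recall why the hypothesis matters. Writing $\Delta(1)$ in the basis of $A\otimes A$ and imposing $\alpha_k\Delta(1)=\Delta(1)\alpha_k$ for each $k$ kills the $1\otimes 1$ coefficient and forces the $1\otimes\alpha_i$ and $\alpha_i\otimes 1$ terms to be absent. The presence of at least two loops is precisely what eliminates the mixed term $1\otimes\alpha_k+\alpha_k\otimes 1$. The remaining $\alpha_i\otimes\alpha_j$ terms are unconstrained, since both sides annihilate them. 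Moreover, a bimodule map $A\to A\otimes A$ is determined by $\Delta(1)$, and any central $\Delta(1)$ yields such a map. Hence $\mathscr{E}_A$ is identified with the span of the $(n-1)^2$ elements $\alpha_i\otimes\alpha_j$, giving $\operatorname{Frobdim}_\k(A)=(n-1)^2$.

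There is no real obstacle. The only delicate point is the hypothesis $n\geq 3$, i.e.\ at least two loops. The preceding example shows that with a single loop one gets $\operatorname{Frobdim}=2\neq 1$. So I would state explicitly that the corollary is meant for $n\geq 3$, as inherited from Proposition \ref{uv=0}.
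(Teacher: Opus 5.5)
Your proposal is correct and follows the paper's own route. The paper also obtains the corollary by combining the converse direction of Proposition~\ref{prop radical^2} (the basis $\{1,\alpha_2,\dots,\alpha_n\}$ with $\alpha_i\alpha_j=0$) with Proposition~\ref{uv=0}; your remark that the hypothesis $n\geq 3$ is needed but missing from the statement is also right, as the single-loop example (Frobenius dimension $2\neq 1$) shows.
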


\begin{thm}\label{teo7}
	If $A$ is a $\k$-algebra  with unit of finite dimension  $n\geq 3$, then $$\operatorname{Frobdim}_\k(A)\leq (n-1)^2.$$ 	                   
\end{thm}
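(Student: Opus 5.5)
The plan is to show that the linear map $\pi:\mathscr{E}_A\to V\otimes V$ is injective, where $V=\operatorname{span}\{v_2,\dots,v_n\}$ for a basis $\mathscr{B}=\{1,v_2,\dots,v_n\}$ of $A$ and $\pi$ sends $\Delta$ to the $V\otimes V$-component of $\Delta(1)$ in the decomposition
$$A\otimes A=(\k 1\otimes \k 1)\oplus(\k1\otimes V)\oplus(V\otimes \k1)\oplus(V\otimes V).$$
Since $\dim V\otimes V=(n-1)^2$, this gives the bound at once.

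\textbf{Reduction to an element of $A\otimes A$.} As in the proof of Proposition \ref{uv=0}, a bimodule map $\Delta$ is determined by $x=\Delta(1)$, and the only condition on $x$ is $ax=xa$ for all $a\in A$. So $\mathscr{E}_A$ is identified with the subspace $\{x\in A\otimes A: ax=xa\ \forall a\}$. If $\pi(x)=0$, then $x$ lies in $1\otimes A+A\otimes 1$, and we may write $x=1\otimes y+z\otimes 1$ with $y,z\in A$. It remains to prove that such an $x$ commuting with all of $A$ must be $0$.

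\textbf{Key step: a slicing trick.} For $x\in A\otimes A$ and $\varphi\in A^*$, set $L_x(\varphi)=(\varphi\otimes \mathrm{id})(x)$. Applying $\varphi\otimes\mathrm{id}$ to $ax=xa$ gives
$$L_x(\varphi\cdot a)=L_x(\varphi)\,a,\qquad\text{where } (\varphi\cdot a)(b)=\varphi(ab).$$
Hence the image $L_x(A^*)$ is a right ideal of $A$.

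For $x=1\otimes y+z\otimes 1$ we get $L_x(\varphi)=\varphi(1)y+\varphi(z)1$. So the image lies in $\operatorname{span}\{y,1\}$ and has dimension at most $2<n$.

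Suppose $z\notin\k1$. Then there is a $\varphi$ with $\varphi(1)=0$ and $\varphi(z)=1$, so $1$ lies in the image. A right ideal containing $1$ is all of $A$, which has dimension $n\geq 3$, a contradiction. Hence $z\in\k1$.

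Symmetrically, use $R_x(\psi)=(\mathrm{id}\otimes\psi)(x)$, which satisfies $R_x(a\cdot\psi)=aR_x(\psi)$ with $(a\cdot\psi)(b)=\psi(ba)$. Its image is a left ideal of dimension at most $2$, and the same argument gives $y\in\k1$.

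\textbf{Conclusion.} We now have $x=c\,1\otimes 1$. Taking $a=v_2$, the condition $ax=xa$ becomes $c\,v_2\otimes 1=c\,1\otimes v_2$, which forces $c=0$. Therefore $\pi$ is injective and $\operatorname{Frobdim}_\k(A)\leq (n-1)^2$.

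I expect the main obstacle to be the key step, in particular checking carefully that the contraction identities really make the images of $L_x$ and $R_x$ one-sided ideals. This is the only place where the hypothesis $n\geq 3$ enters, and it fails for $n=2$, consistent with the earlier Example. Once that is settled, the remaining steps are routine.
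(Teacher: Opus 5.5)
Your proof is correct. The contraction identities you flagged do hold: apply $\varphi\otimes\mathrm{id}$ and $\mathrm{id}\otimes\psi$ to $ax=xa$ with the outer bimodule structure.

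You and the paper establish the same underlying fact, namely that a nearly Frobenius coproduct is determined by the $V\otimes V$-component of $\Delta(1)$, but the mechanisms differ.

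\textbf{The paper's route.} It works in coordinates. It writes $v_iv_j=d_{ij}^0 1+\sum_l d_{ij}^l v_l$ and compares the coefficients of $v_k\Delta(1)$ and $\Delta(1)v_k$. It then uses these equations for two distinct basis vectors $v_k\neq v_r$, which is where $n\geq 3$ enters, to express every $\alpha_j$ and $\beta_i$ as an explicit linear combination of the $\alpha_{ij}$. Finally it recovers $\alpha$ from the $1\otimes v_k$ equation.

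\textbf{Your route.} You show directly that $\mathscr{E}_A\cap(1\otimes A+A\otimes 1)=0$. The key observation is that the slices $(\varphi\otimes\mathrm{id})(x)$ and $(\mathrm{id}\otimes\psi)(x)$ of a commuting tensor form a right, respectively left, ideal. Here $n\geq 3$ enters as the impossibility of a one-sided ideal containing $1$ fitting inside a space of dimension at most $2$. This also explains cleanly why $1\otimes\alpha+\alpha\otimes 1$ survives in the $n=2$ example.

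\textbf{What each buys.} Your version is basis-free and shorter. It also gives the canonical statement that $\mathscr{E}_A$ embeds in $(A/\k 1)\otimes(A/\k 1)$, independent of the chosen complement $V$. The paper's version produces the explicit linear system. It then reuses that system, notably the $v_k\otimes v_k$ and $v_i\otimes v_j$ equations, to extract an extra constraint when some $v_kv_s\neq 0$ in the equality-case analysis of Theorem \ref{frobdim maxima}. Your argument does not directly supply that step.
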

\begin{proof}
As $A$ is a $\k$-algebra with unit $1$ we can choose $\mathscr{B}=\bigl\{1, v_2, \dots, v_n\bigr\}$ basis of $A$ as a $\k$-vector space.\\	

	Let be $\Delta:A\to A\otimes A$ a nearly Frobenius coproduct, it has the following general expression in 1:
	$$\Delta(1)=\alpha 1\otimes 1+\sum_{j=2}^n\alpha_{j} 1\otimes v_j+\sum_{i=2}^n\beta_{i}v_i\otimes 1+ \sum_{i,j=2}^n\alpha_{ij}v_i\otimes v_j.$$
	We may write an element $v_iv_j\in A$ as a linear combination of $\mathscr{B}$:
	 $$\displaystyle{v_iv_j=d_{ij}^01+\sum_{l=2}^nd_{ij}^lv_l}\quad \mbox{for all}\; i,j\in\{2,\dots,n\}.$$
	If we consider an element  $v_k$ and calculate its coproduct, we obtain that:
	$$\begin{array}{lcl}
		\Delta\bigl(v_k\bigr)=v_k\Delta(1)&=&\displaystyle{\alpha v_k\otimes 1+\sum_{j=2}^n\alpha_{j} v_k\otimes v_j+\sum_{i=2}^n\beta_{i}v_kv_i\otimes 1+ \sum_{i,j=2}^n\alpha_{ij}v_kv_i\otimes v_j}\\
		&=&\displaystyle{\alpha v_k\otimes 1+\sum_{j=2}^n\alpha_{j} v_k\otimes v_j+\sum_{i,l=2}^n\beta_{i}d_{ki}^lv_l\otimes 1+ \sum_{i=2}^{n}\beta_id_{ki}^01\otimes 1}\\
		&&\displaystyle{+\sum_{i,j=2}^n\alpha_{ij}d_{ki}^01\otimes v_j+\sum_{i,j,l=2}^n\alpha_{ij}d_{ki}^lv_l\otimes v_j}
	\end{array}$$
	$$\begin{array}{lcl}
		\Delta\bigl(v_k\bigr)=\Delta(1)v_k&=&\displaystyle{\alpha 1\otimes v_k+\sum_{j=2}^n\alpha_{j}1 \otimes v_jv_k+\sum_{i=2}^n\beta_{i}v_i\otimes v_k+ \sum_{i,j=2}^n\alpha_{ij}v_i\otimes v_jv_k}\\
		&=&\displaystyle{\alpha 1\otimes v_k+\sum_{j,l=2}^n\alpha_{j}d_{jk}^l1\otimes v_l+ \sum_{j=2}^{n}\alpha_jd_{jk}^01\otimes 1 +\sum_{i=2}^n\beta_{i} v_i\otimes v_k}\\
		&&\displaystyle{+\sum_{i,j=2}^n\alpha_{ij}d_{jk}^0v_i\otimes 1+\sum_{i,j,l=2}^n\alpha_{ij}d_{jk}^lv_i\otimes v_l}
	\end{array}$$
	Since $\mathscr{B}\otimes\mathscr{B}$ is a base of $A\otimes A$ we obtain nine types of equations involving the coefficients of $\Delta$.
	\begin{itemize}
		\item Comparing the coefficients of $1\otimes 1$ in both expressions we have that 
		\begin{equation}\label{ecuacion 10}
			\sum_{j=2}^{n}d_{kj}^0\beta_j=\sum_{i=2}^{n}d_{ik}^0\alpha_i
		\end{equation}
		\item Comparing the coefficients of $v_k\otimes 1$ in both expressions we have that 
		\begin{equation}\label{ecuacion 11}
			\alpha=\sum_{j=2}^{n}d_{jk}^0\alpha_{kj}-\sum_{j=2}^{n}d_{kj}^k\beta_j
		\end{equation}
		\item Comparing the coefficients of $v_i\otimes 1$, $i\neq k$, in both expressions we have that 
		\begin{equation}\label{ecuacion 12}
			\sum_{j=2}^{n}d_{kj}^i\beta_j=\sum_{j=2}^{n}d_{jk}^0\alpha_{ij}
		\end{equation}
		\item Comparing the coefficients of $1\otimes v_k$ in both expressions we have that 
		\begin{equation}\label{ecuacion 13}
			\alpha=\sum_{i=2}^{n}d_{ki}^0\alpha_{ik}-\sum_{j=2}^{n}d_{jk}^k\alpha_j
		\end{equation}
		\item Comparing the coefficients of $1\otimes v_j$, $j\neq k$, in both expressions we have that 
		\begin{equation}\label{ecuacion 14}
			\sum_{i=2}^{n}d_{ik}^j\alpha_i=\sum_{i=2}^{n}d_{ki}^0\alpha_{ij}
		\end{equation}
		\item Comparing the coefficients of $v_k\otimes v_k$ in both expressions we have that 
		\begin{equation}\label{ecuacion 15}
			\alpha_k-\beta_k=\sum_{j=2}^{n}d_{jk}^k\alpha_{kj}-\sum_{i=2}^{n}d_{ki}^k\alpha_{ik}
		\end{equation}
		\item Comparing the coefficients of $v_k\otimes v_j$, $j\neq k$, in both expressions we have that 
		\begin{equation}\label{ecuacion 16}
			\alpha_j=\sum_{i=2}^{n}d_{ik}^j\alpha_{ki}-\sum_{i=2}^{n}d_{ki}^k\alpha_{ij}
		\end{equation}
		\item Comparing the coefficients of $v_i\otimes v_k$, $i\neq k$, in both expressions we have that 
		\begin{equation}\label{ecuacion 17}
			\beta_i=\sum_{j=2}^{n}d_{kj}^i\alpha_{jk}-\sum_{j=2}^{n}d_{jk}^k\alpha_{ij}
		\end{equation}
		\item Comparing the coefficients of $v_i\otimes v_j$, $i,j\neq k$, in both expressions we have that 
	\begin{equation}\label{ecuacion 18}
		\sum_{l=2}^{n}d_{kl}^i\alpha_{lj}=\sum_{l=2}^{n}d_{lk}^j\alpha_{il}
	\end{equation}
	\end{itemize}
	Since $n\geq 3$ we can choose $v_r\neq v_k$ and  obtain analogous equations of (\ref{ecuacion 16}) and (\ref{ecuacion 17}) for it, that is
	\begin{equation}\label{ecuacion 19}
		\alpha_j=\sum_{i=2}^{n}d_{ir}^j\alpha_{ri}-\sum_{i=2}^{n}d_{ri}^r\alpha_{ij}\;\mbox{for}\; j\neq r
	\end{equation}
	and
	\begin{equation}\label{ecuacion 20}
		\beta_i=\sum_{j=2}^{n}d_{rj}^i\alpha_{jr}-\sum_{j=2}^{n}d_{jr}^r\alpha_{ij} \;\mbox{for}\; i\neq r
	\end{equation}
	From equations (\ref{ecuacion 13}), (\ref{ecuacion 16}),  (\ref{ecuacion 17}), (\ref{ecuacion 19}) and (\ref{ecuacion 20}) we obtain that $1+(n-1)+(n-1)$ coefficients of $\Delta$ are determined from the others, then
	$$\operatorname{Frobdim}_\k(A)\leq n^2-1-2(n-1)=n^2-2n+1=(n-1)^2.$$
\end{proof}

\begin{thm} \label{frobdim maxima}
	Consider $A$ a $\k$-algebra with unit $1$. There exists $\mathscr{B}=\bigl\{1, v_2, \dots, v_n\bigr\}$ basis of $A$ as a $\k$-vector space with $v_iv_j=0$ for all $i,j=2, \dots, n$, $n\geq 3$ if and only if $\operatorname{Frobdim}_\k(A)=(n-1)^2$.
\end{thm}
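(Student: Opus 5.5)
The implication "$\Rightarrow$" is already done: if such a basis exists, Proposition \ref{uv=0} gives $\operatorname{Frobdim}_\k(A)=(n-1)^2$. For "$\Leftarrow$" I would reuse the proof of Theorem \ref{teo7} and ask what equality forces.

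\textbf{Step 1: recast equality as a surjectivity statement.} Fix any basis $\mathscr{B}=\{1,v_2,\dots,v_n\}$ and put $V=\operatorname{span}\{v_2,\dots,v_n\}$. In that proof, equations (\ref{ecuacion 13}), (\ref{ecuacion 16}), (\ref{ecuacion 17}), (\ref{ecuacion 19}) and (\ref{ecuacion 20}) express $\alpha,\alpha_j,\beta_i$ linearly in terms of the matrix $(\alpha_{ij})$. Hence the projection
$$\mathscr{E}_A\to V\otimes V,\qquad \Delta\mapsto \sum_{i,j}\alpha_{ij}\,v_i\otimes v_j,$$
is injective. If $\operatorname{Frobdim}_\k(A)=(n-1)^2$, it is therefore bijective. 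So every matrix $(\alpha_{ij})$ occurs as the $V\otimes V$-part of a coproduct. The remaining coefficients are then the values given by the formulas above, and all nine families of equations, (\ref{ecuacion 10})–(\ref{ecuacion 18}), must hold identically in the $\alpha_{ij}$.

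Two remarks on this step:
\begin{itemize}
\item The argument does not depend on the chosen complement. So this surjectivity holds for every basis of the form $\{1,v_2,\dots,v_n\}$, and I may change the $v_i$ later.
\item Since $\alpha$ is given both by (\ref{ecuacion 11}) and by (\ref{ecuacion 13}), their difference is a further linear identity in the $\alpha_{ij}$. The same holds for the two expressions for $\alpha_j$ (from $k$ and from $r$) and the two for $\beta_i$. Each such identity must vanish coefficientwise.
\end{itemize}

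\textbf{Step 2: read off the structure constants.} I would specialize $(\alpha_{ij})$ to elementary matrices $E_{pq}$.
\begin{itemize}
\item Put $E_{pq}$ into (\ref{ecuacion 18}), $\sum_l d_{kl}^i\alpha_{lj}=\sum_l d_{lk}^j\alpha_{il}$ for $i,j\neq k$. Taking $p\neq i$, $q=j$ gives $d_{kp}^i=0$. Taking $p=i$, $q\neq j$ gives $d_{qk}^j=0$. The diagonal choice gives $d_{ki}^i=d_{jk}^j$.
\item Letting $k$ vary, this should show: $v_kv_l\in \k1+\k v_k+\k v_l$, and the coefficient of $v_k$ in $v_kv_l$ and of $v_l$ in $v_kv_l$ is a constant independent of the indices. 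Call these constants $c_k$ (for left multiplication by $v_k$) and $c'_l$.
\item Similarly, putting $E_{pq}$ into (\ref{ecuacion 12}) and (\ref{ecuacion 14}), after substituting the $\beta_j,\alpha_i$ from Step 1, should force most $d^0_{ij}$ to vanish or to be tied to the linear coefficients. Comparing with (\ref{ecuacion 10}) and (\ref{ecuacion 15}) should give $c_k=c'_k$.
\end{itemize}

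\textbf{Step 3: change basis.} Put $v_i'=v_i-c_i1$. By the end of Step 2 the goal is that $v_i'v_j'$ has no component along any $v_l'$ and no constant term. In other words, $\operatorname{span}\{v_i'\}$ is an ideal $J$ with $J^2=0$, and $\{1,v_2',\dots,v_n'\}$ is the required basis; Proposition \ref{prop radical^2} then also identifies $A$ with the one-vertex radical square zero algebra.

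The computation behind the reduction to $J^2=0$ is as in the dimension-3 analysis: the commutative/noncommutative dichotomy there matches $c_k=c'_k$ versus $c_k\neq c'_k$. If the conditions of Step 2 are not strong enough, I would instead invoke the basis-freeness from Step 1 and repeat the argument with the modified basis $\{1,v_i'\}$, where the linear coefficients now vanish. Then I only need the constant terms $d^0_{ij}$ to vanish, which should follow from (\ref{ecuacion 11}) versus (\ref{ecuacion 13}) and from (\ref{ecuacion 12}) with generic $(\alpha_{ij})$.

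\textbf{Main obstacle.} I expect the hardest part to be the careful extraction in Step 2, in particular pinning down the $d^0_{ij}$. The substitutions for $\beta_i,\alpha_j$ depend on the auxiliary index $r$, and one must check consistency across all choices of $k$ and $r$. Here $n\ge 3$ is essential, since it guarantees $r\neq k$. The $n=2$ example, where $\operatorname{Frobdim}_\k(A)=2>1$, shows the argument genuinely breaks without this.
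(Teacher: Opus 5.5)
Your Step 1 is correct, and it takes a different and sturdier route than the paper. The paper fixes one basis, assumes some $v_kv_s\neq 0$ in it, and tries to extract one extra independent condition by cases on the nonzero structure constant. That cannot work for an arbitrary basis, because the conclusion depends on the basis. For $\k[x,y]/(x,y)^2$ with basis $\{1,1+x,y\}$ one has $v_2v_3=v_3\neq 0$, yet the instance of (\ref{ecuacion 18}) used in the case $h\neq k$ collapses to $\alpha_{33}=\alpha_{33}$. Your reformulation turns every equation into an identity in $(\alpha_{ij})$ and lets you change the basis afterwards, which is exactly what handles this. Your extraction from (\ref{ecuacion 18}) is also right. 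It already gives $c_k=c'_k$ without (\ref{ecuacion 10}) or (\ref{ecuacion 15}): setting $\gamma_k:=d^i_{ki}=d^j_{jk}$ for $i,j\neq k$, you get $v_av_b=d^0_{ab}1+\gamma_bv_a+\gamma_av_b$ for $a\neq b$, and $v_k^2\in\k1+\k v_k$.

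The gap is in Step 3. Equation (\ref{ecuacion 18}) says nothing about the diagonal constant $d^k_{kk}$, the $v_k$-coefficient of $v_k^2$, and the shift $v_k'=v_k-\gamma_k1$ does not remove it: the $v_k'$-coefficient of $v_k'^2$ is $e_k:=d^k_{kk}-2\gamma_k$. So in the new basis it is false that the linear coefficients vanish. You only get $v'_av'_b\in\k1$ for $a\neq b$ and $v_k'^2=\lambda_k1+e_kv'_k$.

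The equations you plan for the last step cannot kill $e_k$. Take $A=\k\times\k[\epsilon]/(\epsilon^2)$ with $v_2=(1,0)$ and $v_3=(0,\epsilon)$. All off-diagonal products and all constant terms vanish, and the shift is trivial. Moreover (\ref{ecuacion 18}), (\ref{ecuacion 12}) and the comparison of (\ref{ecuacion 11}) with (\ref{ecuacion 13}) all hold identically. Yet $v_2^2=v_2$, and this Frobenius algebra has $\operatorname{Frobdim}_\k(A)=3<4$. Indeed (\ref{ecuacion 15}) with $k=3$ forces $\alpha_{23}=\alpha_{32}$.

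The missing step is this. In the shifted basis, take $k\neq r$ (possible since $n\geq 3$). Then (\ref{ecuacion 19}) and (\ref{ecuacion 20}) read $\alpha_k=-e_r\alpha_{rk}$ and $\beta_k=-e_r\alpha_{kr}$, while (\ref{ecuacion 15}) reads $\alpha_k=\beta_k$. As an identity in $(\alpha_{ij})$, this forces $e_r=0$ for every $r$. The constants then vanish as you say, via (\ref{ecuacion 12}), whose left side is now zero. Alternatively, they vanish directly by associativity: comparing $(v'_av'_a)v'_b$ with $v'_a(v'_av'_b)$ for $b\neq a$ gives $\lambda_a=0$ and $v'_av'_b=0$. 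So the real obstacle is $d^k_{kk}$, not $d^0_{ij}$. With this step added, your argument becomes a complete proof.
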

\begin{proof}
	The direct statement is exactly Proposition \ref{uv=0}.
	We now prove the converse of the theorem.\\
Using the same notation of the previous theorem,    
	$$\Delta(1)=\alpha 1\otimes 1+\sum_{j=2}^n\alpha_{j} 1\otimes v_j+\sum_{i=2}^n\beta_{i}v_i\otimes 1+ \sum_{i,j=2}^n\alpha_{ij}v_i\otimes v_j$$
	and suppose that there exist $k,s\in\{2,\dots, n\}$ such that $v_kv_s\neq 0$. We will prove that this implies that $\operatorname{Frobdim}_\k(A)<(n-1)^2$.\\
We only need to determine one additional condition over the coefficients of $\Delta$ using the previous restriction.\\
Since  $v_kv_s\neq 0$, we have $\displaystyle{v_kv_s=d_{ks}^01+\sum_{l=2}^nd_{ks}^lv_l}\neq 0$ then $d_{ks}^0\neq 0$ or $d_{ks}^l\neq 0$ for some $l\in\{2,\dots,n\}$.\\

If $d_{ks}^h\neq 0$ for some $h\in\{2,\dots,n\}$ and $h\neq k$, using equation \ref{ecuacion 18}, we have 
$$\alpha_{sj}=\frac{1}{d_{ks}^h}\left(\sum_{l=2}^{n}d_{lk}^j\alpha_{hl}- \sum_{l\neq s}d_{kl}^h\alpha_{lj}\right)\quad\mbox{for all}\; j\neq k.$$

If $d_{ks}^k\neq 0$ we combine \ref{ecuacion 19} and \ref{ecuacion 20} for $i=j=k$ and \ref{ecuacion 15} to determine $\alpha_{rk}$ in terms of the others $\alpha_{ij}$.\\

If $d_{ks}^0\neq 0$ necessarily some $d_{ks}^h\neq 0$, because if $v_iv_j=d_{ij}^01$ for all $i$ and $j$, using the associativity of the product, we would conclude that two elements of the basis are multiples of each other or one is zero.\\
Finally we have that $$\operatorname{Frobdim}_\k(A)<(n-1)^2.$$
\end{proof}


\section{Bounds on the Frobenius dimension for algebras  $\k Q/I$}
Building upon the results of Section 4, we now specialize our analysis to bound quiver algebras. By exploiting the combinatorial structure of path algebras, we derive even sharper bounds for their Frobenius dimension and characterize the precise conditions under which these limits are reached, particularly in quivers with multiple vertices.\\

The following result completely characterizes path algebras of dimension $n$ when their Frobenius dimension is $(n-1)^2$.

\begin{cor}\label{maximal}
	If $\displaystyle{A =\Bbbk Q/I}$ is a connected finite dimensional algebra with $\dim_{\Bbbk} A = n \geq 3 $, then $\operatorname{Frobdim}_\k(A) = (n-1)^2$ if and only if $A$ is a radical square zero algebra and $|Q_0| = 1$.
\end{cor}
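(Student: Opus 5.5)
The plan is to reduce Corollary \ref{maximal} to Theorem \ref{frobdim maxima} and Proposition \ref{prop radical^2}. Both of those results characterize the algebras of dimension $n\geq 3$ with $\operatorname{Frobdim}_\k(A)=(n-1)^2$ purely algebraically: such an algebra has a basis $\{1,v_2,\dots,v_n\}$ with $v_iv_j=0$, and is then isomorphic to the radical square zero algebra of the one-vertex quiver with $n-1$ loops. What remains is to translate this abstract description into the language of a bound quiver $(Q,I)$, using that the quiver of a basic algebra is determined up to isomorphism.

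For the \emph{if} direction, assume $A=\k Q/I$ is radical square zero and $|Q_0|=1$. Then $I=R_Q^2$, and $Q$ is a single vertex carrying loops $\alpha_2,\dots,\alpha_m$. The basis of $A$ is $\{e_1,\alpha_2,\dots,\alpha_m\}$, so $m=n$, and all products $\alpha_i\alpha_j$ vanish. Corollary \ref{p7} (equivalently, Proposition \ref{uv=0}) then gives $\operatorname{Frobdim}_\k(A)=(n-1)^2$ directly.

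For the \emph{only if} direction, assume $\operatorname{Frobdim}_\k(A)=(n-1)^2$. By Theorem \ref{frobdim maxima}, $A$ has a basis $\{1,v_2,\dots,v_n\}$ with $v_iv_j=0$ for all $i,j\geq 2$. Let $N=\operatorname{span}\{v_2,\dots,v_n\}$. This $N$ is a two-sided ideal with $N^2=0$ and $A/N\cong\k$, so $N$ is nilpotent with semisimple quotient, hence $N=\operatorname{rad}A$. It follows that $A$ is local with $A/\operatorname{rad}A\cong\k$, so $A$ has exactly one primitive idempotent up to conjugation. For a bound quiver algebra $\k Q/I$, the images of the trivial paths $e_v$, $v\in Q_0$, are pairwise orthogonal nonzero idempotents summing to $1$. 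Locality therefore forces $|Q_0|=1$, consistent with the connectedness hypothesis. Since $I$ is admissible, $\operatorname{rad}(\k Q/I)=R_Q/I$, and $(\operatorname{rad}A)^2=R_Q^2/I=0$ gives $I=R_Q^2$. Thus $A$ is radical square zero with a single vertex. Alternatively, this last step follows from Proposition \ref{prop radical^2} combined with the fact that the quiver of an admissible presentation is an isomorphism invariant.

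The main obstacle is the identification $\operatorname{rad}A=N$ and the deduction $|Q_0|=1$ from locality. These need care because an abstract algebra isomorphism need not preserve the given presentation $\k Q/I$. I handle this by working only with intrinsic invariants: the number of complete primitive orthogonal idempotents, and the Loewy length $(\operatorname{rad}A)^2=0$. Admissibility of $I$ guarantees that each of these invariants is read off faithfully from $(Q,I)$.
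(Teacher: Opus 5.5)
Your argument is correct, but it finishes the \emph{only if} direction differently from the paper.

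The paper uses Theorem \ref{frobdim maxima} and Proposition \ref{prop radical^2} only to get $|Q_0|=1$. In doing so it tacitly assumes that the quiver of the given presentation is determined by the isomorphism class of $A$. It then handles $|Q_1|=1$ separately: there $A\cong\k[x]/(x^n)$, whose Frobenius dimension is $n\neq(n-1)^2$. For $|Q_1|\geq 2$ it argues on the coefficients of $\Delta(1)$. Terms $e\ot\omega$ and $\omega\ot e$ are said to be excluded, giving $\leq(n-1)^2$. A nonzero product $\alpha_i\alpha_j$ is said to exclude $\alpha_j\ot\alpha_j$, which makes the inequality strict.

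You instead read both conclusions off the basis supplied by Theorem \ref{frobdim maxima}, using intrinsic invariants. Since $N=\operatorname{rad}A$, the algebra is local, and the nonzero orthogonal idempotents $e_v$ then force $|Q_0|=1$. Since $(\operatorname{rad}A)^2=R_Q^2/I=0$ and $I\subseteq R_Q^2$, you get $I=R_Q^2$.

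This gives you an argument with no case split, and it makes the presentation-independence explicit. It also avoids the paper's coefficient step, whose first claim fails for one-vertex bound quiver algebras in general. For the exterior algebra $\k\langle x,y\rangle/(x^2,y^2,xy+yx)$, one checks directly that $1\ot xy+xy\ot 1+y\ot x-x\ot y$ commutes with $x$ and $y$, so $e\ot\omega$ terms do occur. The bound $\leq(n-1)^2$ itself is, of course, Theorem \ref{teo7}.

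What the paper's extra computation would offer, if tightened, is a direct reason why a nonzero product of loops lowers the dimension. In your version that reason is delegated entirely to Theorem \ref{frobdim maxima}. In both proofs, that theorem is where the real content of the corollary lies.
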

\begin{proof}
	Consider $\displaystyle{A =\Bbbk Q/I}$  a connected finite dimensional algebra with $\dim_{\Bbbk} A = n \geq 3 $. If  $\operatorname{Frobdim}_\k(A) = (n-1)^2$ then, by Theorem \ref{frobdim maxima} and Proposition \ref{prop radical^2}, we conclude that $|Q_0| = 1$. If $|Q_1|=1$ we are in the particular case $\displaystyle{A= \frac{\Bbbk[x]}{\bigl(x^{n-1}\bigr)}}$ and $\operatorname{Frobdim}_\k(A) =n$ (see Example 2 from \cite{AGL15}). We can then asume that $|Q_1|\geq 2$.  Let us call $e$ the trivial path and $B$ a basis of $A$. Since the elements of the form $e\otimes \omega$ and $\omega\otimes e$  with $\omega\in B$ cannot appear in $\Delta(1)$, we already know that $$\operatorname{Frobdim}_\k(A) \leq n^2-2n+1=(n-1)^2,$$ so we only need to prove that if $A$ is not radical square zero then the inequality is strict.\\
	Suppose that is not a radical square zero algebra, that means there are at least $2$ loops $\alpha_i$ and $\alpha_j$, not necessarily different, such that $\alpha_i\alpha_j\neq 0$. Then the element $\alpha_j\otimes \alpha_j$ cannot appear in $\Delta(1)$ since $\Delta(\alpha_i)=\Delta(e)\alpha_i=\alpha_i\Delta(e)$ and  $$\operatorname{Frobdim}_\k(A) < (n-1)^2.$$
	
	The converse is Corollary \ref{p7}.
	
\end{proof}

In the following theorem we refine the bound given in Theorem \ref{teo7} for the Frobenius dimension in the case of path algebras with two or more vertices.

\begin{thm}\label{teo}
	Let $\displaystyle{A=\k Q/I}$ be a finite dimensional $\Bbbk$-algebra, where $\operatorname{dim}_\k(A)=n$ and $Q$ is a connected quiver with $|Q_0| = k\geq 2$. 
	Then 
	$$\operatorname{Frobdim}_\k(A) \leq \sum_{i=1}^k g\mathcal{P}^{+}(i).g\mathcal{P}^{-}(i)+2(n-k)-|Q_1|.$$ 
	
\end{thm}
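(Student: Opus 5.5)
The plan is a dimension count. I place $\Delta(1)$ inside an explicit subspace of $A\otimes A$ and then exhibit enough independent linear conditions on its coefficients. Throughout I read $g\mathcal{P}^{\pm}(i)$ as counting the \emph{nontrivial} paths starting (resp.\ ending) at $i$, so that $\dim e_iA=g\mathcal{P}^{+}(i)+1$ and $\dim Ae_i=g\mathcal{P}^{-}(i)+1$. Each nontrivial path in a basis of $A$ starts at exactly one vertex and ends at exactly one vertex, hence
$$\sum_{i=1}^k g\mathcal{P}^{+}(i)=\sum_{i=1}^k g\mathcal{P}^{-}(i)=n-k .$$

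\textbf{Step 1 (localisation at vertices).} Since $\Delta$ is a bimodule map and $1=\sum_i e_i$, we have $e_i\Delta(1)=\Delta(e_i)=\Delta(1)e_i$. Hence $\Delta(e_i)=e_i\Delta(1)e_i\in e_iA\otimes Ae_i$ and
$$\Delta(1)=\sum_{i}\Delta(e_i)\in\bigoplus_{i=1}^k e_iA\otimes Ae_i .$$
This subspace has dimension
$$\sum_i\bigl(g\mathcal{P}^{+}(i)+1\bigr)\bigl(g\mathcal{P}^{-}(i)+1\bigr)=\sum_i g\mathcal{P}^{+}(i)g\mathcal{P}^{-}(i)+2(n-k)+k .$$
It therefore suffices to find $k+|Q_1|$ independent linear conditions on the coefficients of $\Delta(1)$ in the basis $\{p\otimes q\}$ of this subspace.

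\textbf{Step 2 (the $k$ conditions).} For an arrow $\alpha:i\to j$, the bimodule condition reads $\alpha\Delta(e_j)=\Delta(e_i)\alpha$.

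Every term of $\alpha\Delta(e_j)$ has first tensor factor of length $\geq 1$. The coefficient of $e_i\otimes\alpha$ on the right-hand side is the coefficient of $e_i\otimes e_i$ in $\Delta(e_i)$. Comparing, this coefficient vanishes.

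Symmetrically, every term of $\Delta(e_i)\alpha$ has second factor of length $\geq 1$. Comparing coefficients of $\alpha\otimes e_j$ shows that the coefficient of $e_j\otimes e_j$ in $\Delta(e_j)$ vanishes.

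Because $Q$ is connected with $k\geq 2$ vertices, every vertex is the source or the target of some arrow. So all $k$ coefficients of $e_i\otimes e_i$ vanish.

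\textbf{Step 3 (the $|Q_1|$ conditions).} For the same arrow $\alpha:i\to j$, compare the coefficient of $\alpha\otimes\alpha$ on both sides.

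On the left it comes only from the term $e_j\otimes\alpha$ of $\Delta(e_j)$. This is a legitimate basis element of $e_jA\otimes Ae_j$, because $\alpha$ ends at $j$.

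On the right it comes only from the term $\alpha\otimes e_i$ of $\Delta(e_i)$. This is legitimate because $\alpha$ starts at $i$.

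I should double-check that no longer path $p$ satisfies $p\alpha=\alpha$ or $\alpha p=\alpha$ in $A$, which holds since paths form a basis of $A$ and length is additive. Granting this, we get one relation per arrow:
$$\mathrm{coef}_{e_j\otimes\alpha}\bigl(\Delta(e_j)\bigr)=\mathrm{coef}_{\alpha\otimes e_i}\bigl(\Delta(e_i)\bigr).$$

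\textbf{Step 4 (independence).} In the relation for $\alpha$, the variable $\mathrm{coef}_{e_j\otimes\alpha}$ is indexed by $\alpha$ itself and appears in no other relation from Step 3. None of the Step 2 relations involves it either, since those only involve the coefficients of $e_i\otimes e_i$. Hence the $k+|Q_1|$ relations each solve for a distinct variable and are linearly independent. Subtracting $k+|Q_1|$ from the dimension in Step 1 gives the bound in Theorem \ref{teo}.

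The main obstacle I expect is convention-dependent bookkeeping rather than a conceptual difficulty. First, the count relies on reading $g\mathcal{P}^{\pm}$ without the trivial path. If the paper's convention includes it, the count must instead produce exactly $2(n-k)-|Q_1|$ surplus, and the argument has to be re-balanced. Second, the independence check in Step 4, in particular the identification of which basis tensors can contribute to $\alpha\otimes\alpha$, needs care in the presence of loops, where $i=j$; I would verify that case separately.
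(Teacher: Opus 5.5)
Your proposal is correct and is essentially the paper's proof: you localize $\Delta(1)$ in $\bigoplus_i e_iA\otimes Ae_i$ via $e_i\Delta(1)=\Delta(1)e_i$, use an arrow at each vertex to kill the $k$ coefficients of $e_i\otimes e_i$, and get one relation per arrow $\alpha$ by comparing coefficients of $\alpha\otimes\alpha$. Your pivot-variable check makes explicit the independence the paper only asserts, and your reading of $g\mathcal{P}^{\pm}$ without trivial paths matches the paper's (cf.\ its Remark and the $3$-cycle example).

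One small repair: for non-monomial $I$, paths need not form a basis and length is not additive. So base the coefficient comparisons in Steps 2--3 on a basis $\{e_i\}\cup Q_1\cup\mathcal{B}'$ with $\mathcal{B}'$ a basis of $J^2=(R^2+I)/I$. Such a basis exists because $I\subseteq R^2$ makes the arrows independent modulo $J^2$, and with it no element of $J^2$ has an $\alpha$-component.
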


\begin{proof}
	Let $\Delta$ be a nearly Frobenius coproduct. This coproduct admits the following representation:	
	$$\Delta(1) = \sum_{i,j = 1}^{k} \lambda_{ij} e_i \otimes e_j + \sum_{i,j=1}^k\sum_{r=1}^{g\mathcal{P}^{-}(j)} \mu_{ij}^re_i\otimes w_{rj} + \sum_{i,j = 1}^k\sum_{l}^{g\mathcal{P}^{+}(i)} \delta_{ij}^l v_{li}\otimes e_j + \sum_{i,j=1}^k\sum_{r=1}^{g\mathcal{P}^{+}(i)}\sum_{l}^{g\mathcal{P}^{-}(j)}\eta_{ij}^{rl} v_{ri}\otimes w_{lj}$$ 
	where $e_i \in Q_0$, $v_{ki}$ are the paths of positive length starting at the vertex $i$, $w_{lj}$ are the paths of positive length ending at $j$ and $\lambda_{ij}, \mu_{ij}^r, \delta_{ij}^l,\eta_{ij}^{rl} \in \Bbbk$ $\forall i,j,r,l$. 	
	
	Since $\Delta(e_i) = e_i \Delta(1) = \Delta(1)e_i$  $\forall i = 1, \ldots, k $, we have that 
	$$\lambda_{ij} =  \mu_{ij}^r= \delta_{ij}^l=\eta_{ij}^{rl}=0 \text{ if } i\not = j \in Q_0$$ 
	therefore $\Delta$ verifies:
	$$\Delta(1) = \sum_{i= 1}^k \lambda_i \bigl(e_i \otimes e_i\bigr) + \sum_{i=1}^k\sum_r \mu_i^r e_i\otimes w_{ri} + \sum_{i = 1}^k\sum_l \delta_i^l v_{li}\otimes e_i + \sum_{i=1}^{k}\sum_{r,l}\eta_i^{lr} v_{li}\otimes w_{ri}.$$
	
	Since $Q$ is connected, for every vertex $i$ there is an arrow $\alpha:i\to j$ or an arrow $\beta: j\to i$. Consider the first case (the other one is analogous). 
	$$\Delta(\alpha) = \alpha \Delta(1) = \lambda_j \alpha \otimes e_j + \sum_r\mu_j^r\alpha \otimes w_{rj}+\sum_l\delta_j^l  \alpha v_{lj}\otimes e_j +\sum_{l,r}\eta_j^{lr} \alpha v_{lj}\otimes w_{rj}.$$
	$$\Delta(\alpha) = \Delta(1) \alpha = \lambda_i e_i \otimes \alpha + \sum_r\mu_i^r e_i \otimes w_{ri}\alpha + \sum_l\delta_i^l v_{li}\otimes \alpha +\sum_{l,r}\eta_i^{lr} v_{li}\otimes w_{ri}\alpha.$$
	At first sight we observe that $\lambda_i = 0$  $\forall i = 1, \ldots, k$ hence $\Delta$ satisfies that

	$$\Delta(1) = \sum_{i=1}^k\sum_{r} \mu_{i}^re_i\otimes w_{ri} + \sum_{i = 1}^k\sum_{l} \delta_{i}^l v_{li}\otimes e_i + \sum_{i=1}^k\sum_{r,l}\eta_{i}^{rl} v_{ri}\otimes w_{li}.$$ 
	If we consider the subset $\{w_{1j},\cdots ,w_{gr^-(j)j}\}$ of  the paths of length 1 (arrows) ending at $j$ and $\{v_{1i},\cdots ,v_{gr^+(i)i}\}$ the arrows strarting at $i$ then the above equation imposed by $\Delta(\alpha)$ implies that $$ \sum_{r=1}^{gr^-(j)}\mu_j^r\alpha\otimes w_{rj}=\sum_{l=1}^{gr^+(i)}\delta_i^lv_{li}\otimes \alpha$$
	so both sums have at most one non zero summand (let us suppose that is the first one) with  $\mu_j^1=\delta_i^1$ and  $w_{1j}=\alpha=v_{1i}$. Otherwise $\mu_{j}^r=0$ for $r=1,\cdots ,gr^-(j)$.

	In conclusion, every $\mu_{j}^r$ is determined by the $\delta_{i}^l$ or is zero and we have at least $|Q_1|$ independent equations. 
	
		From the above, 
	$$\operatorname{Frobdim}_\k(A)\leq\sum_{i=1}^k g\mathcal{P}^{+}(i).g\mathcal{P}^{-}(i)+2(n-k)-|Q_1|$$ 
	\end{proof}
	\begin{rem}
	Let  us verify that the boundary is actually finer. We will prove that, $$\sum_{i=1}^k g\mathcal{P}^{+}(i).g\mathcal{P}^{-}(i)+2(n-k)-|Q_1|\leq (n-k)^2+2(n-k)-1 < (n-1)^2.$$ 
	First observe that $(n-k)^2+2(n-k)-1 < (n-1)^2$ if and only if $k^2-2(n+1)k +4n-2 < 0 \ \forall k$ that verify the hypothesis. On the other hand the polynomials $p_n(k) = k^2-2(n+1)k +4n-2$ have 2 different roots $r_1 < r_2$ where
	
	\begin{itemize}
		\item $r_1 = n+1-\sqrt{(n-1)^2+2}<2$,
		
		\item $r_2 = n+1+\sqrt{(n-1)^2+2}>2n$.
	\end{itemize}
	
	Since every $k$ that verifies the hypothesis is included in the set $\{2\ldots,n-1\}$, the inequality follows.
\end{rem}

\begin{example}
Consider the radical square zero algebra $A$ associated to the following quiver:
$$\xymatrix{
	&\bullet_2\ar[dr]^{\beta}&\\
	\bullet_1\ar[ur]^{\alpha}&&\bullet_3\ar[ll]^{\gamma}}$$ 
In this case $\operatorname{Frobdim}_\k(A)=6$, $n=6$, $k=3$, $|Q_1|=3$ and $\sum_{i} g\mathcal{P}^+(i).g\mathcal{P}^{-}(i)=3$ so the boundary is reached.
	\end{example}

\section{Coproduct and Frobenius dimension of truncated path algebras}	

We conclude by investigating the nearly Frobenius structures of truncated path algebras. By introducing the original concepts of paths with no detours and the chop operation, we derive explicit formulas for the coproduct and the exact Frobenius dimension, transforming what was previously a complex computational problem into an elegant and straightforward derivation.

\begin{defn}
Let $Q$ be a quiver. We say that a path in $Q$ has {\bf no detours} if $gr^+(i)=1$ for every vertex involved in the path except maybe the last, and $gr^-(i)=1$ for every vertex except maybe the first.
\end{defn}

For $m\geq 2$,  we define de following sets:
$$\begin{array}{lcl}
	\mathcal{ND}_{m-1}&=&\displaystyle{\bigl\{\alpha_1\cdots\alpha_{m-1}\in Q_{m-1}:\alpha_1\cdots\alpha_{m-1}\; \mbox{has no detours}\bigr\}}\\
	\mathcal{ND}_{m}&=&\displaystyle{\bigl\{\alpha_1\cdots\alpha_{m}\in Q_{m}:\alpha_2\cdots\alpha_{m-1}\; \mbox{has no detours}\bigr\}}\\
	&\vdots&\\
	\mathcal{ND}_{m+i}&=&\displaystyle{\bigl\{\alpha_1\cdots\alpha_{m+i}\in Q_{m+i}:\alpha_{i+2}\cdots\alpha_{m-1}\; \mbox{has no detours}\bigr\}}\\
	&\vdots&\\
	\mathcal{ND}_{2m-2}&=&\displaystyle{\bigl\{\alpha_1\cdots\alpha_{2m-2}\in Q_{2m-2}\bigr\}}
\end{array}$$

\begin{defn}
If $\alpha_1\cdots\alpha_t$ is a path of the quiver $Q$, we define its {\bf chop} as 
$$w_{\alpha_1\cdots\alpha_t}=\sum_{s:\max\{s,t-s\}<m}\alpha_{s+1}\cdots\alpha_{t}\otimes \alpha_1\cdots\alpha_{s}$$
\end{defn}

\begin{thm}\label{frobdim truncada LLm}
	Let $\displaystyle{A=\k Q/R^m}$ be a truncated path $\Bbbk$-algebra,  and $Q$ a connected quiver. 
	Then, the general expression of the nearly Frobenius coproduct is:
	$$\Delta(1)=\sum_{i=-1}^{m-2}\quad\sum_{\alpha_1\cdots\alpha_{m+i}\in \mathcal{ND}_{m+i}}\lambda_{\alpha_1\cdots\alpha_{m+i}}w_{\alpha_1\cdots\alpha_{m+i}}$$
	where $\lambda_{\alpha_1\cdots\alpha_{m+i}}\in\k$,  and $w_{\alpha_1\cdots\alpha_{m+i}}$ is the chop of  $\alpha_1\cdots\alpha_{m+i}$, for $i=-1,\dots, m-2$.\\
		In particular,
			\begin{equation}\label{e2}
			\operatorname{Frobdim}_\k(A)= \sum_{i=-1}^{m-2}|\mathcal{ND}_{m+i}|=\sum_{i=-1}^{m-3}|\mathcal{ND}_{m+i}|+|Q_{2m-2}|	
			\end{equation}
\end{thm}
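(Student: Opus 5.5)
Our plan is to reduce the problem to a combinatorial analysis of the bimodule condition on the path basis. A nearly Frobenius coproduct is determined by $\Delta(1)$, subject to $a\Delta(1)=\Delta(1)a$ for all $a\in A$. It suffices to impose this for the trivial paths $e_i$ and the arrows $\beta\in Q_1$, since these generate $A$.

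\textbf{Step 1: reduction to tensors of paths with matching endpoints.} As in the proof of Theorem \ref{teo}, the condition $e_i\Delta(1)=\Delta(1)e_i$ forces $\Delta(1)$ to be a combination of tensors $u\otimes v$ with $s(u)=t(v)$. Here $u,v$ are nonzero paths in $A$, that is, of length $<m$.

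Such a tensor $u\otimes v$ can be glued to the path $v u$ of length $t=\ell(u)+\ell(v)\le 2m-2$. Grouping coefficients by this glued path $p=\alpha_1\cdots\alpha_t$, we write
$$\Delta(1)=\sum_{p}\ \sum_{s}c_{p,s}\,\alpha_{s+1}\cdots\alpha_t\otimes\alpha_1\cdots\alpha_s ,$$
with $\max\{s,t-s\}<m$. The chop $w_p$ is exactly the case of constant coefficients $c_{p,s}$ along $p$.

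\textbf{Step 2: the arrow equations.} For an arrow $\beta$, we compare
$$\beta\,\alpha_{s+1}\cdots\alpha_t\otimes\alpha_1\cdots\alpha_s \qquad\text{and}\qquad \alpha_{s+1}\cdots\alpha_t\otimes\alpha_1\cdots\alpha_s\,\beta .$$
The left product is nonzero only if $\beta$ ends at $s(\alpha_{s+1})$ and $t-s+1<m$; the right one is nonzero only if $\beta$ starts at $t(\alpha_s)$ and $s+1<m$.

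Matching basis tensors of $A\otimes A$ gives three facts.
\begin{enumerate}
\item[(a)] If both terms survive and $\beta=\alpha_s$, the glued path is unchanged. This yields $c_{p,s}=c_{p,s-1}$, a shift along the same path.
\item[(b)] A term $\beta\,\alpha_{s+1}\cdots\otimes\cdots$ in which $\beta$ does not come from the same glued path can only be cancelled if $\beta$ is the unique arrow into $s(\alpha_{s+1})$. Otherwise its coefficient vanishes. Symmetrically, on the right we need the unique arrow out of $t(\alpha_s)$.
\item[(c)] Terms where only one side survives, because the other product falls into $R^m$, force the coefficient to be zero unless the cut sits at a boundary position.
\end{enumerate}

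Combining (a)–(c) should give the following. For a fixed glued path $p$ of length $t=m+i$, all surviving coefficients along $p$ are equal, so the contribution is a multiple of $w_p$. That multiple is nonzero only when every internal cut vertex within the window where both sides stay nonzero has in-degree and out-degree $1$. This window is $\alpha_{i+2}\cdots\alpha_{m-1}$, which is exactly the no-detours condition defining $\mathcal{ND}_{m+i}$. For $t=2m-2$ there is only one admissible cut ($s=m-1$), so there is no constraint; for $t=m-1$ the whole path must have no detours.

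\textbf{Step 3: converse and count.} We check directly that each $w_p$ with $p\in\mathcal{ND}_{m+i}$ satisfies $\beta w_p=w_p\beta$: the telescoping shifts match, and the no-detours hypothesis kills the stray terms. The chops $w_p$ for distinct $p$ have disjoint supports in the basis of $A\otimes A$, so they are linearly independent. This gives the dimension formula, and the second equality is just $\mathcal{ND}_{2m-2}=Q_{2m-2}$.

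\textbf{Main obstacle.} The hardest part will be the bookkeeping in Step 2. The crux is determining precisely which cut positions are constrained by degrees, and why the window is $\alpha_{i+2}\cdots\alpha_{m-1}$ rather than the whole path. I would first work out $m=2$ and $m=3$ explicitly to calibrate the indices, then argue by induction along the cut index $s$.
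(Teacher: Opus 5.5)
Your strategy matches the paper's. You group the coefficients of $\Delta(1)$ by the glued path $p=vu$, and use $\beta=\alpha_s$ to propagate the coefficient across all admissible cuts, so that $p$ contributes $\lambda_p w_p$. You use an arrow $\beta\neq\alpha_s$ at a cut vertex to force the no-detours condition, and you verify the converse directly. Your disjoint-support remark also makes explicit the linear independence the paper leaves implicit.

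There is, however, a genuine omission. Step 2 only treats glued paths of length $t=m+i\geq m-1$. You never show that glued paths of length $t\leq m-2$ (including the terms $e_i\otimes e_i$) contribute nothing. Yet the sum in the statement starts at length $m-1$, and the dimension count depends on this. This is exactly where the connectivity of $Q$ enters, a hypothesis your proposal never uses. It also needs $R^m\neq 0$, which the paper's proof invokes explicitly and which is genuinely required:
for $Q$ a single vertex, $A=\Bbbk$ has Frobenius dimension $1$ while the formula gives $0$;
for $Q$ a single arrow $\alpha\colon x_0\to x_1$ and $m=3$, the chop $\alpha\otimes e_{x_0}+e_{x_1}\otimes\alpha$ is a nonzero coproduct not covered by the formula.

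The fix fits your framework. If $t\leq m-2$, every cut $s\in\{0,\dots,t\}$ has $\ell(u),\ell(v)\leq m-2$, so your principle (b) applies on both sides at every cut.
\begin{itemize}
\item At $s=0$ there is no $\alpha_0$, so any arrow into $x_0$ kills the coefficient.
\item Likewise, at $s=t$ any arrow out of $x_t$ kills it.
\item At the other positions, the only arrow in is $\alpha_s$ and the only arrow out is $\alpha_{s+1}$.
\end{itemize}
So every arrow incident to a vertex of $p$ is among $\alpha_1,\dots,\alpha_t$. By connectivity, $Q$ consists of the vertices and arrows of $p$ alone. No vertex repeats: if $x_j=x_k$ with $j<k$, going back along the unique incoming arrows would produce an arrow into $x_0$. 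Thus $Q$ is the linear quiver $x_0\to\cdots\to x_t$ with $t<m$, contradicting $R^m\neq 0$. The paper reaches the same conclusion through its acyclic/cyclic case analysis.

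Finally, your item (c) is not a separate mechanism. A product vanishing for length reasons only removes constraints: nothing can be prepended to $u$ at the cut $s=i+1$, and nothing can be appended to $v$ at $s=m-1$. This is exactly why the window is $\alpha_{i+2}\cdots\alpha_{m-1}$. Within it there is only an out-degree condition at the first vertex and only an in-degree condition at the last, rather than ``in- and out-degree $1$'' throughout.
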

\begin{proof}
	We will describe what conditions the elementary tensors derived from a path in the quiver must meet to appear in the nearly Frobenius coproduct.\\
		
	Using the same arguments as in the proof of Theorem \ref{teo} we have that terms of the form $e_i \otimes e_i$ cannot appear in the coproduct and that if $u\otimes v$ appears in $\Delta(1)$ then $s(u)=t(v)$.
	Then, if $\vert Q_0\vert=k$, we can express the coproduct as follows:
	$$\Delta(1) = \sum_{i=1}^k\sum_{r} \mu_{i}^re_i\otimes w_{ri} + \sum_{i = 1}^k\sum_{l} \delta_{i}^l v_{li}\otimes e_i + \sum_{i=1}^k\sum_{r,l}\eta_{i}^{rl} v_{ri}\otimes w_{li},$$ 
	where $v_{ri}$ are the paths of positive length starting at the vertex $i$ and $w_{lj}$ are the paths of positive length ending at $j$.
	
	Let us now suppose that $u\otimes v$ appears in $\Delta(1)$ with coefficient $\nu$ with $u=\alpha_{r+1}\cdots \alpha_t$ and $v=\alpha_1\cdots \alpha_r$. Then $t-r\leq m-1$ and $r\leq m-1$. Since
	$\Delta(\alpha_r)=\alpha_r\Delta(1)=\Delta(1)\alpha_r$, in the case where $t+1-r\leq m-1$, we obtain that the term $\alpha_r\alpha_{r+1}\cdots \alpha_t\otimes \alpha_1\cdots \alpha_{r-1}$ appears in $\Delta(1)$ with the same coefficient $\nu$.  If $r+1\leq m-1$ we can proceed in an analogous way and obtain that $\alpha_{r+2}\cdots \alpha_t\otimes \alpha_1\cdots \alpha_{r+1}$ appears in $\Delta(1)$ with the same coefficient $\nu$. In conclusion, if $\alpha_{r+1}\cdots \alpha_t\otimes \alpha_1\cdots \alpha_{r}$ appears in $\Delta(1)$ with coefficient $\nu$, then the following sum appears in $\Delta(1)$:
	\begin{equation}\label{paseos}
		\nu\left(\sum_{s:\max\{s,t-s\}<m}\alpha_{s+1}\cdots\alpha_{t}\otimes \alpha_1\cdots\alpha_{s}\right)
	\end{equation}
	Let us now consider the terms of the form $\alpha_1\cdots \alpha_t\otimes e_i$ with $t<m-1$. We will prove that these terms cannot appear in $\Delta(1)$: 
	
	First consider the case were $\alpha_1\cdots \alpha_t$ does not contain a cycle.
	Since $Q$ is connected and $R^m\neq \{0\}$, there exists a vertex $j$ in the path $\alpha_1\cdots \alpha_t$ and an arrow $\beta\neq \alpha_i$ $i=1,\cdots ,t$ such that $t(\beta)=j$ or $s(\beta)=j$.\\	
	Let us suppose that $t(\beta)=t(\alpha_k)$ for some $k=1,\dots, t $.
	If $\mu\alpha_1\cdots \alpha_t\otimes e_i$ appears in $\Delta(1)$, using the argument previously exposed, $\mu\alpha_{k+1}\cdots \alpha_t\otimes \alpha_1\cdots \alpha_k$ appears in $\Delta(1)$ and $\mu\beta\alpha_{k+1}\cdots \alpha_t\otimes \alpha_1\cdots \alpha_k$ would appear in $\Delta(\beta)=\beta\Delta(1)=\Delta(1)\beta$. But this cannot happen since $\alpha_k\neq \beta$. This means that $\mu=0$ and $\alpha_1\cdots \alpha_t\otimes e_{i}$ cannot appear in $\Delta(1)$. 
	The case where  $s(\beta)=s(\alpha_k)$, $s(\beta)=t(\alpha_t)$ and $t(\beta)=s(\alpha_1)$ are analogous.
\begin{figure}[h]
	\centering
	\[\begin{tikzcd} 
	\bullet & \bullet & \cdots & \bullet & \bullet &&&& \\ 
	&& \bullet &&& \bullet \\ && \vdots &&& \vdots \\ && \bullet &&& \bullet & \bullet & \cdots & \bullet \\ 
	&&& \bullet & \bullet \arrow["{\gamma_1}", from=1-1, to=1-2] \arrow["{\gamma_2}", from=1-2, to=1-3] \arrow["{\gamma_q}", from=1-3, to=1-4] \arrow["{c_1}", from=1-4, to=1-5] \arrow["{c_2}", from=1-5, to=2-6] \arrow["{c_p}", from=2-3, to=1-4] \arrow[from=2-6, to=3-6] \arrow[from=3-3, to=2-3] \arrow["{c_r}", from=3-6, to=4-6] \arrow[from=4-3, to=3-3] \arrow["{\beta_1}", from=4-6, to=4-7] \arrow[from=4-6, to=5-5] \arrow["{\beta_2}", from=4-7, to=4-8] \arrow["{\beta_s}", from=4-8, to=4-9] \arrow[from=5-4, to=4-3] \arrow[from=5-5, to=5-4]
	 \end{tikzcd}\]
	 \caption{cycle in the path}
	  \label{ciclo}
\end{figure}	
	If $\alpha_1\cdots\alpha_t$ contains a cycle $c_1\cdots c_p$ then is of the form $\gamma_1\cdots \gamma_q (c_1\cdots c_p)^h c_1\cdots c_r\beta_1\cdots \beta_s$, where $\gamma_q\neq c_p$ and $\beta_1\neq c_{r+1}$ (see Figure \ref{ciclo}). We will divide the study into two cases. 
	
	\begin{itemize}
	\item If $\gamma_1\cdots \gamma_q=0$, then $\alpha_1\cdots\alpha_t$ is of the form $(c_1\cdots c_p)^h c_1\cdots c_r\beta_1\cdots \beta_s$. 	If $\mu\alpha_1\cdots \alpha_t\otimes e_i$ appears in $\Delta(1)$, then $\mu c_p(c_1\cdots c_p)^h c_1\cdots c_r\beta_1\cdots \beta_s\otimes e_i$ would appear in $\Delta(c_p)=c_p\Delta(1)=\Delta(1)c_p$. But this cannot happen since $c_p\neq e_i$. 
	
\item If $\gamma_1\cdots \gamma_q\neq 0$, we suppose   that $\mu \gamma_1\cdots \gamma_q (c_1\cdots c_p)^h c_1\cdots c_r\beta_1\cdots \beta_s\otimes e_i$ 
	appears in $\Delta(1)$, then $\mu  (c_1\cdots c_p)^h c_1\cdots c_r\beta_1\cdots \beta_s\otimes \gamma_1\cdots \gamma_q$ appears too. Therefore,\\  $\mu  c_p(c_1\cdots c_p)^h c_1\cdots c_r\beta_1\cdots \beta_s\otimes \gamma_1\cdots \gamma_q$
	appears in $\Delta(c_p)=\Delta(1)c_p$, but, as before, using that $\gamma_q\neq c_p$ we conclude that $\mu=0$.
\end{itemize}
	Then, using the equation (\ref{paseos}) we conclude that the elements of the form $u\otimes v$ such that the sum of the lengths of $u$ and $v$ is lower than $m-1$ will neither appear in $\Delta(1)$.
	
	Then
	$$\Delta(1)=\sum_{i=-1}^{m-2}\quad\sum_{\alpha_1\cdots\alpha_{m+i}\in Q_{m+i}}\lambda_{\alpha_1\cdots\alpha_{m+i}}w_{\alpha_1\cdots\alpha_{m+i}}$$
	where $\lambda_{\alpha_1\cdots\alpha_{m+i}}\in k$.

	We will prove then that the coproduct has the following expression
	$$\Delta(1)=\sum_{i=-1}^{m-2}\quad\sum_{\alpha_1\cdots\alpha_{m+i}\in \mathcal{ND}_{m+i}}\lambda_{\alpha_1\cdots\alpha_{m+i}}w_{\alpha_1\cdots\alpha_{m+i}}$$
	Observe that, if $t=m+i$ then 
	\begin{equation}\label{chop}
		w_{\alpha_1\cdots\alpha_{m+i}}=\sum_{s=i+1}^{m-1}\alpha_{s+1}\cdots\alpha_{m+i}\otimes\alpha_1\cdots \alpha_{s}
	\end{equation}
	To prove that the paths involved in the chops in $\Delta(1)$ belong to $\mathcal{ND}_{m+i}$ for all $-1\leq i\leq m-2$ let us suppose by absurd that $\alpha_1\cdots \alpha_{m+i}\in Q_{m+i}$ with $\alpha_{i+2}\cdots \alpha_{m-1}$ having a detour. Consider the case where $gr^-(t(\alpha_{k}))\geq 2$  with $i+2\leq k\leq m-1$ and name $\beta\neq \alpha_k$ such that $t(\beta)=t(\alpha_k)$. If $\alpha_1\cdots \alpha_{m+i}$ appears in a chop in $\Delta(1)$ then $\alpha_{k+1}\cdots \alpha_{m+i}\otimes \alpha_1\cdots \alpha_k\in \Delta(1)$ and $\beta\alpha_{k+1}\cdots \alpha_{m+i}\otimes \alpha_1\cdots \alpha_k\in \Delta(1)$ should belong to $\Delta(\beta)$ but this cannot happen since it doesn't end in $\beta$. Then $gr^{-}(t(\alpha_k))=1$ and with an analogous argument we can prove that  $gr^{+}(s(\alpha_k))=1$ and then $\alpha_1\cdots \alpha_{m+i}\in \mathcal{ND}_{m+i}$.\bigbreak
	Let us verify now that every chop of the form $$w_{\alpha_1\cdots \alpha_{m+i}}=\sum_{s=i+1}^{m-1}\alpha_{s+1}\cdots\alpha_{m+i}\otimes\alpha_1\cdots \alpha_{s}$$ with  $\alpha_1\cdots\alpha_{m+i}\in \mathcal{ND}_{m+i}$ satisfies the Frobenius condition, let us suppose that $\Delta(1)=w_{\alpha_1\cdots\alpha_{m+i}}$.
	Given $\alpha\in Q_1$ let us check that $\Delta(\alpha)=\alpha\Delta(1)=\Delta(1)\alpha$ considering the following cases:
	\begin{itemize}
	\item  If $\alpha$ is not connected to the path $\alpha_{i+2}\cdots\alpha_{m-1}$ then using the expression  \ref{chop}, we conclude that $\Delta(\alpha)=0$ and the condition holds.	
	\item If $t(\alpha)=s(\alpha_{i+2})$  (the case $s(\alpha)=t(\alpha_{m-1})$ is similar) then  $\Delta(\alpha)=\alpha\Delta(1)=\alpha\alpha_{i+2}\cdots \alpha_{m+i}\otimes \alpha_1\cdots \alpha_{i+1}=0$ since  $\alpha\alpha_{i+2}\cdots \alpha_{m+i}$ has length $m$. On the other hand, $\Delta(1)\alpha=\alpha_{i+2}\cdots \alpha_{m+i}\otimes \alpha_1\cdots \alpha_{i+1}\alpha$ and if $\alpha_1\cdots \alpha_{i+1}\alpha\neq 0$ then $s(\alpha)=t(\alpha_{i+1})=s(\alpha_{i+2})$ so if $\alpha$ is not a loop this cannot happen. In the case $\alpha$ is a loop, since $\alpha_{i+2}\cdots \alpha_{m-1}$ has no detours, the only option is one vertex with one loop and in this particular case the Frobdim was already computted (see Example 2 from \cite{AGL15}) and the result holds. 
\item If $\alpha$ belongs to $\alpha_{i+2}\cdots\alpha_{m-1}$, this means $\alpha=\alpha_k$ for some $k=i+2,\cdots m-1$ and $\Delta(\alpha_k)=\alpha_k\Delta(1)=\Delta(1)\alpha_k=\alpha_k\alpha_{k+1}\cdots \alpha_{m+i}\otimes \alpha_1\cdots\alpha_k$.

	\end{itemize}

\end{proof}

\begin{cor}\label{frobdim radical^2=0}
	Let $\displaystyle{A=\k Q/R^2}$ be a finite dimensional radical square zero $\Bbbk$-algebra, where $Q$ is a connected quiver. Then, the general expression of the nearly Frobenius coproduct is:
	$$\Delta(1) = \sum_{\alpha\in  \mathcal{ND}_{1}} \lambda_\alpha\bigl(\alpha\otimes e_{s(\alpha)}+e_{t(\alpha)}\otimes\alpha\bigr) + \sum_{\alpha_1\alpha_2\in Q_2}\lambda_{\alpha_1\alpha_2}\,\alpha_2\otimes\alpha_1.$$ 
	where $\lambda_\alpha, \lambda_{\alpha_1\alpha_2} \in\Bbbk$ for all $\alpha\in  \mathcal{ND}_{1}$ and $\alpha_1\alpha_2\in Q_2$.\\
	In particular,
	\begin{equation}\label{e1}
		\operatorname{Frobdim}_\k(A)=|\mathcal{ND}_{1}|+|Q_2|.
		\end{equation}
	\end{cor}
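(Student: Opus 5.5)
This is the case $m=2$ of Theorem \ref{frobdim truncada LLm}, so the plan is to specialize that theorem and make the index sets and chops explicit. With $m=2$ the outer index runs over $i=-1,0$. This gives two families of generators:
\begin{itemize}
\item \emph{Paths of length $m-1=1$.} These are the arrows in $\mathcal{ND}_1$.
\item \emph{Paths of length $2m-2=2$.} These are all of $Q_2$, because the definition of $\mathcal{ND}_{2m-2}$ imposes no condition.
\end{itemize}
So the only work is to compute the chop $w$ in each case and to check that the index sets are the ones stated.

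\emph{Chop of an arrow.} For $t=1$ the chop sums over $s\in\{0,1\}$ with $\max\{s,1-s\}<2$, so both values occur.
\begin{itemize}
\item For $s=0$ we get $\alpha\otimes(\text{empty path})$. The empty path must be read as the trivial path at the appropriate vertex, namely $e_{s(\alpha)}$.
\item For $s=1$ we get $e_{t(\alpha)}\otimes\alpha$.
\end{itemize}
Hence $w_\alpha=\alpha\otimes e_{s(\alpha)}+e_{t(\alpha)}\otimes\alpha$. This is consistent with the constraint from Theorem \ref{teo} that a term $u\otimes v$ in $\Delta(1)$ satisfies $s(u)=t(v)$.

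\emph{Chop of a length-two path.} For $t=2$ the condition $\max\{s,2-s\}<2$ forces $s=1$, so $w_{\alpha_1\alpha_2}=\alpha_2\otimes\alpha_1$.

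\emph{Index sets.} For $\mathcal{ND}_1$, the general definition $\mathcal{ND}_{m-1}$ is the set of paths of length $1$ with no detours. For $\mathcal{ND}_2=\mathcal{ND}_{2m-2}$, the definition via $\alpha_{i+2}\cdots\alpha_{m-1}$ with $i=0$ refers to the empty range $\alpha_2\cdots\alpha_1$, so there is no condition and $\mathcal{ND}_2=Q_2$.

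Substituting these into the formula of Theorem \ref{frobdim truncada LLm} gives the stated expression for $\Delta(1)$. The dimension formula (\ref{e1}) then follows from (\ref{e2}) with $m=2$, since the first sum reduces to the single term $|\mathcal{ND}_1|$.

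\emph{Linear independence.} The count in (\ref{e2}) is a count of a basis, so it is worth checking that the chops are linearly independent. The tensors $\alpha\otimes e_{s(\alpha)}$ and $\alpha_2\otimes\alpha_1$ are distinct elements of the tensor basis $\mathscr{B}\otimes\mathscr{B}$, and distinct generators involve disjoint sets of such tensors. So the generators are linearly independent and the dimension is exactly $|\mathcal{ND}_1|+|Q_2|$.

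\emph{Main obstacle: degenerate cases.} The step needing care is interpreting the degenerate cases of the general theorem.
\begin{itemize}
\item The empty-path factors in the chop have to be identified with the correct idempotents, as done above.
\item The single-vertex, single-loop quiver was excluded in the proof of Theorem \ref{frobdim truncada LLm} and handled by citation. Here $A\cong\k[x]/(x^2)$, with $\mathcal{ND}_1=\{\alpha\}$ and $Q_2=\{\alpha\alpha\}$. The formula then gives $\Delta(1)=\lambda(\alpha\otimes e+e\otimes\alpha)+\mu\,\alpha\otimes\alpha$ and dimension $2$, which agrees with the Example after Proposition \ref{uv=0}.
\end{itemize}

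If one prefers not to depend on the general theorem, a direct route works as well. By Theorem \ref{teo}, $\Delta(1)$ has no terms $e_i\otimes e_j$ and $s(u)=t(v)$ for every term $u\otimes v$. Next, apply $\alpha\Delta(1)=\Delta(1)\alpha$ for each arrow $\alpha$, using that all products of two arrows vanish. This pairs the coefficient of $\alpha\otimes e_{s(\alpha)}$ with that of $e_{t(\alpha)}\otimes\alpha$, and a competing arrow $\beta$ with $t(\beta)=t(\alpha)$ or $s(\beta)=s(\alpha)$ forces that common coefficient to vanish. Finally, the terms $\alpha_2\otimes\alpha_1$ are killed by multiplication by any arrow on either side, so they are unconstrained.
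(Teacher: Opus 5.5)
Your proposal is correct and is essentially the paper's proof: the paper also specializes Theorem \ref{frobdim truncada LLm} to $m=2$ and records the chops $w_\alpha=\alpha\otimes e_{s(\alpha)}+e_{t(\alpha)}\otimes\alpha$ and $w_{\alpha_1\alpha_2}=\alpha_2\otimes\alpha_1$. Your extra checks (linear independence, the single-loop case, and the direct computation from $\beta\Delta(1)=\Delta(1)\beta$ using $R^2=0$) are all sound and make the corollary independent of the general theorem, though, like the paper, you tacitly need $Q$ to have at least one arrow, since for the one-vertex quiver with no arrows $A=\k$ has Frobenius dimension $1$ while the formula gives $0$.
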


\begin{proof}
	Observe that 
	$$\omega_\alpha=\alpha\otimes e_{s(\alpha)}+e_{t(\alpha)}\otimes\alpha $$ and $$\omega_{\alpha_1\alpha_2}=\alpha_2\otimes\alpha_1$$

\end{proof}

We will finish this paper by studying the case of regular quivers.

\begin{cor}\label{regular quivers}
Let $\displaystyle{A=\k Q/R^m}$ be a finite dimensional truncated path $\Bbbk$-algebra, where $Q$ is a regular connected quiver, then
\begin{enumerate}
\item[$(a)$] If $\operatorname{gr}(Q) = 1$: $\displaystyle{\Delta(1)=\sum_{i=-1}^{m-2}\quad\sum_{\alpha_1\cdots\alpha_{m+i}\in Q_{m+i}}\lambda_{\alpha_1\cdots\alpha_{m+i}}w_{\alpha_1\cdots\alpha_{m+i}}}$ and 	
$$\operatorname{Frobdim}_\k(A) = m|Q_0|.$$
\item[$(b)$] If $\operatorname{gr}(Q)=k$:  $\displaystyle{\Delta(1)=\sum_{\alpha_1\cdots\alpha_{2m-2}\in Q_{2m-2}}\lambda_{\alpha_1\cdots\alpha_{2m-2}}w_{\alpha_1\cdots\alpha_{2m-2}}}$ and 
$$\operatorname{Frobdim}_\k(A) = k^{2m-2}|Q_0|.$$
\end{enumerate}	
\end{cor}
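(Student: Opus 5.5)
The plan is to derive both parts directly from Theorem \ref{frobdim truncada LLm}, which gives $\operatorname{Frobdim}_\k(A)=\sum_{i=-1}^{m-2}|\mathcal{ND}_{m+i}|$ and describes $\Delta(1)$ as a combination of chops of paths in $\mathcal{ND}_{m+i}$. For a regular quiver the no-detour condition becomes trivial in one regime and impossible in the other. So everything reduces to two things: identifying the sets $\mathcal{ND}_{m+i}$, and counting paths of a fixed length in a regular quiver.

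First I will record a counting fact. If $Q$ is regular with $\operatorname{gr}(Q)=k$, then every vertex has exactly $k$ outgoing arrows. By induction on $t$, the number of paths of length $t$ starting at a given vertex is $k^t$, so $|Q_t|=k^t|Q_0|$. These are paths in $Q$, not in $A$; this is what the theorem counts, since $\mathcal{ND}_{m+i}\subseteq Q_{m+i}$.

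For part $(a)$, take $\operatorname{gr}(Q)=1$. Then every vertex has in-degree and out-degree $1$, so every path in $Q$ has no detours, and connectedness forces $Q$ to be a single oriented cycle $C^{|Q_0|}$ (possibly one loop). For a path of length $m+i$ the relevant middle segment $\alpha_{i+2}\cdots\alpha_{m-1}$ automatically has no detours, so $\mathcal{ND}_{m+i}=Q_{m+i}$ for all $i=-1,\dots,m-2$. This gives the stated form of $\Delta(1)$. By the counting fact each $|Q_{m+i}|=|Q_0|$, and there are $m$ values of $i$, so $\operatorname{Frobdim}_\k(A)=m|Q_0|$. In the one-loop case this agrees with $\operatorname{Frobdim}(\k[x]/(x^m))=m$ from Example 2 of \cite{AGL15}. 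This is reassuring, since the proof of Theorem \ref{frobdim truncada LLm} defers to that example for loops.

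For part $(b)$, take $\operatorname{gr}(Q)=k\geq 2$. For $-1\le i\le m-3$ the segment $\alpha_{i+2}\cdots\alpha_{m-1}$ has length $m-2-i\geq 1$; for $i=-1$ it is the whole path $\alpha_1\cdots\alpha_{m-1}$. Its first vertex $s(\alpha_{i+2})$ is not the last vertex of the segment, and it has out-degree $k\ge 2$, so the segment has a detour. Hence $\mathcal{ND}_{m+i}=\emptyset$ for $i\le m-3$, while $\mathcal{ND}_{2m-2}=Q_{2m-2}$ by definition. Substituting into the theorem gives the stated $\Delta(1)$ and $\operatorname{Frobdim}_\k(A)=|Q_{2m-2}|=k^{2m-2}|Q_0|$.

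The main point to check carefully is the indexing of the middle segment in the definition of $\mathcal{ND}_{m+i}$. I need to confirm that the segment is nonempty exactly for $i\le m-3$, so that the out-degree obstruction applies, and empty (hence vacuous) for $i=m-2$. A secondary point is the edge case $m=2$ in $(b)$, which I will check against Corollary \ref{frobdim radical^2=0}: it should give $|\mathcal{ND}_1|+|Q_2|=0+k^2|Q_0|$, consistent with the formula.
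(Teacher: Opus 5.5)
Your proposal is correct and follows essentially the same route as the paper: both reduce to Theorem \ref{frobdim truncada LLm}, observe that $\mathcal{ND}_{m+i}=Q_{m+i}$ when $\operatorname{gr}(Q)=1$ and that $\mathcal{ND}_{m+i}=\emptyset$ for $i\le m-3$ when $\operatorname{gr}(Q)=k\ge 2$, and then count $|Q_t|=k^t|Q_0|$. Your version is more explicit than the paper's: you locate the detour at the first vertex of the segment $\alpha_{i+2}\cdots\alpha_{m-1}$, and you make visible the assumption $k\ge 2$ that part $(b)$ implicitly needs.
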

\begin{proof}
	\begin{enumerate}
		\item[$(a)$] Note that, if  $\operatorname{gr}(Q) = 1$, $\mathcal{ND}_{m+i}=Q_{m+i}$, for $i=-1,\dots, m-2$. Then, the coproduct, applying Theorem  \ref{frobdim truncada LLm}, is
		  $\displaystyle{\Delta(1)=\sum_{i=-1}^{m-2}\quad\sum_{\alpha_1\cdots\alpha_{m+i}\in Q_{m+i}}\lambda_{\alpha_1\cdots\alpha_{m+i}}w_{\alpha_1\cdots\alpha_{m+i}}}$ and 
		  $\displaystyle{\operatorname{Frobdim}_\k(A)= \sum_{i=-1}^{m-2}|Q_{m+i}|}$. Since $\operatorname{gr}(Q) = 1$ we have that $|Q_{m+i}|=|Q_0|$, then $\displaystyle{\operatorname{Frobdim}_\k(A)=m|Q_0|}$.
		  \item[$(b)$] In this case $\mathcal{ND}_{m+i}=0$ for $i=-1,\dots, m-3$, since we do not have no detours. Then, by Theorem  \ref{frobdim truncada LLm},
		  $$\displaystyle{\Delta(1)=\sum_{i=-1}^{m-2}\quad\sum_{\alpha_1\cdots\alpha_{m+i}\in \mathcal{ND}_{m+i}}\lambda_{\alpha_1\cdots\alpha_{m+i}}w_{\alpha_1\cdots\alpha_{m+i}}=\sum_{\alpha_1\cdots\alpha_{2m-2}\in Q_{2m-2}}\lambda_{\alpha_1\cdots\alpha_{2m-2}}w_{\alpha_1\cdots\alpha_{2m-2}}}$$
		    In particular, using that $\operatorname{gr}(Q)=k$, $\operatorname{Frobdim}_\k(A) = |Q_{2m-2}|= k^{2m-2}|Q_0|$.
		    
	\end{enumerate}
\end{proof}

Since the algebras in Item $(a)$ from the corollary above are Frobenius algebras, the result can also be proved using Corollary 10 of \cite{AGM22}.

If we consider the particular case where the quiver has a single vertex, we recover the truncated polynomial algebra in \(n\) variables, where \(n\) is the number of arrows of the quiver. We see how Corollary \ref{regular quivers} looks in this context.

\begin{cor}
	\begin{enumerate}
		\item If $\displaystyle{A=\frac{\k\bigl[x\bigr]}{\bigl\langle x^m\bigr\rangle}}$, then		
	 $\displaystyle{\Delta(1)=\sum_{i=-1}^{m-2}\lambda_i\left(\sum_{j+k=m+i: j,k\leq m-1}x^j\otimes x^k\right)}$ and\; $\operatorname{Frobdim}_\k(A)=m$.
	
		\item If $\displaystyle{A=\frac{\k\bigl[x_1,\dots, x_n\bigr]}{\bigl\langle x_1^{i_1}\cdots x_n^{i_n}: i_1+\cdots i_n=m\bigr\rangle}}$, then
		\begin{itemize}
			\item $\displaystyle{\Delta(1)=\sum_{i_1+\cdots+i_n=2m-2}\lambda_{i_1,\dots,i_n}\left(\sum_{j_s,k_s: j_s+k_s=i_s, s=1,\dots,n}x_1^{j_1}\cdots x_n^{j_n}\otimes x_1^{k_1}\cdots x_n^{k_n}\right)}$
			\item $\operatorname{Frobdim}_\k(A)=n^{2(m-1)}$.
		\end{itemize}	
	\end{enumerate}

\end{cor}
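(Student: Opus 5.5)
The plan is to read both items off Corollary \ref{regular quivers} by specializing to a quiver with a single vertex. The identification of the algebras comes first. Let $Q$ have one vertex and $n$ loops $x_1,\dots,x_n$. Then $\k Q/R^m$ is spanned by words of length $<m$ in the $x_s$, and this is the noncommutative truncated algebra. The second algebra in the statement is written with commutative variables, so I will read $\k[x_1,\dots,x_n]$ there as the free algebra on the loops. In that reading the monomial $x_1^{j_1}\cdots x_n^{j_n}$ stands for a path, and the index $(i_1,\dots,i_n)$ stands for a path of length $2m-2$. Under the commutative reading the count $n^{2(m-1)}$ would be false: for $n\ge 2$ the commutative algebra has dimension $\binom{n+m-1}{n}$, and there are not $n^{2m-2}$ monomials of degree $2m-2$. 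With one vertex and $n$ loops, $Q$ is regular with $\operatorname{gr}(Q)=n$ and $|Q_0|=1$.

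Item 1 is the case $n=1$, which is Corollary \ref{regular quivers}(a). There $Q_{m+i}$ consists of the single path $x^{m+i}$, for $i=-1,\dots,m-2$. The chop of $x^{m+i}$ is
$$w_{x^{m+i}}=\sum_{s:\max\{s,m+i-s\}<m} x^{m+i-s}\otimes x^{s}=\sum_{j+k=m+i,\ j,k\le m-1}x^j\otimes x^k,$$
which gives the displayed formula for $\Delta(1)$. The dimension is $m|Q_0|=m$, which agrees with Example 2 of \cite{AGL15}.

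Item 2 is the case $n\ge 2$, which is Corollary \ref{regular quivers}(b) with $k=n$. There $\Delta(1)$ is a combination of chops of paths of length $2m-2$. For such a path the condition $\max\{s,2m-2-s\}<m$ forces $s=m-1$, so each chop is a single tensor of two paths of length $m-1$. Hence the number of parameters is $|Q_{2m-2}|=n^{2m-2}|Q_0|=n^{2(m-1)}$. To match the displayed formula, I will rewrite each path of length $2m-2$ in the monomial notation and note that its chop is the split into two halves. This is the main obstacle: the displayed inner sum over all $j_s+k_s=i_s$ has to be matched against this single split, and the commutative notation obscures it. I would settle it by checking directly, from the truncation at length $m$, that only the terms with total degree $m-1$ on each side survive in $A\otimes A$.

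Finally, the hypotheses of Theorem \ref{frobdim truncada LLm} hold here: the quiver is connected, and the one-loop case that was excluded in its proof is exactly the one covered by \cite{AGL15}. So both items follow directly from Corollary \ref{regular quivers}.
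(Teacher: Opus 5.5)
Your proposal is correct and follows the paper's route: the paper obtains this corollary by specializing Corollary~\ref{regular quivers} to a one-vertex quiver with $n$ loops, using part (a) for $n=1$ and part (b) with $k=n\ge 2$. Your reading of item 2 as the truncated free algebra, with words in place of commutative monomials and each chop of a path of length $2m-2$ reducing to its single middle split, is the one under which the statement holds. One correction to your side remark: for $m=2$ the commutative and free truncated algebras coincide and the count $n^2$ is right, so only the displayed commutative $\Delta(1)$ fails there (it gives just the symmetric tensors); the count itself fails in the commutative reading for, e.g., $n=2$, $m=3$, where $\operatorname{Frobdim}=9$ instead of $16$.
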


\section*{Acknowledgments}
This work emerged from our participation in the workshop Matemáticas en el Cono Sur 2, which took place in Montevideo in February 2024. We would like to express our gratitude to the organizers of the event: Eugenia Ellis, Maria Inés Fariello, and Andrea Solotar, for their efforts in making this collaboration possible.

\bibliographystyle{elsarticle-harv}

\begin{thebibliography}{}
	

\bibitem{A96} L. Abrams, {\em Two-dimensional topological quantum field theories and Frobenius algebras}. J. Knot Theory Ramifications 5, no. 5,pp. 569-587 (1996).
	
\bibitem{AGL15} D. Artenstein, A. Gonz\'alez,  and M. Lanzilotta, {\em {C}onstructing {N}early {F}robenius {A}lgebras}. Algebras and Representation Theory (2015) {V}olume 18, 339-367.

\bibitem{AGM20} D. Artenstein, A. Gonz\'alez,  and G. Mata  {\em Nearly Frobenius structures in some families of algebras}. São Paulo J. Math. Sci.14(2020), no.1, 165-184.


\bibitem{AGM22} D. Artenstein, A. González, and G. Mata, {\em Nearly Frobenius dimension of Frobenius algebras}. Communications in Algebra, 50(11), 4906–4916 (2022). https://doi.org/10.1080/00927872.2022.2077953


\bibitem{Assem} I.  Assem, D.  Simson and A.  Skowro{\'n}ski.  {\em Elements of the Representation Theory of Associative Algebras}.  Cambridge University Press (2006).

https://doi.org/10.1080/00927870008826917.


\bibitem{CG} R. L. Cohen, V. Godin, A polarized view of string topology. Topology, geometry and
quantum field theory, London Math. Soc. Lecture Note Ser., vol. 308, pp. 127-154 (2004).

\bibitem{GLSU} A. González, E. Lupercio, C. Segovia, B. Uribe, Nearly Frobenius algebras. European Journal of Mathematics 5, 881–902 (2019). https://doi.org/10.1007/s40879-019-00354-3.

\bibitem{KST21} Kobayashi, Yuji; Shirayanagi, Kiyoshi; Tsukada, Makoto and Takahasi, Sin-Ei, {\emph A complete classification of three-dimensional algebras over $\mathbb{R}$ and $\mathbb{C}$ }. Asian-European Journal of Mathematics. Volume 14(8),  (2021), 		2150131 (25 pages). 
		https://doi.org/10.1142/S179355712150131X


\bibitem{SY} A. Skowro{\'n}ski, K. Yamagata, \emph{Frobenius algebras. I. Basic representation theory}. EMS Textbooks in Mathematics. European Mathematical Society (EMS), Zürich (2011).


\end{thebibliography}

\end{document}